\DeclareMathAlphabet{\mathpzc}{OT1}{pzc}{m}{it}
\newcommand{\tuple}[1]{\mathfrak{#1}}
\newcommand{\Var}[1]{\mathcal{#1}}
\newcommand{\Sec}[2]{\sigma_{#1}(#2)}
\newcommand{\tensor}[1]{\mathpzc{#1}}
\newcommand{\vect}[1]{\mathbf{#1}}
\newcommand{\sten}[3]{\vect{#1}_{#2}^{#3}}
\newcommand{\Tang}[2]{\mathrm{T}_{#1} {#2}}
\newcommand{\deriv}[2]{\mathrm{d}_{#1}#2}
\newcommand{\diag}{\operatorname{diag}}
\newcommand{\C}{\mathbb{C}}
\newcommand{\R}{\mathbb{R}}
\newcommand{\reftag}[1]{\ref{#1}}
\newcommand{\reflem}[1]{\cref{#1}}
\newcommand{\refeqn}[1]{{(\ref{#1})}}
\newcommand{\refalg}[1]{Algorithm \ref{#1}}
\newcommand{\refsec}[1]{{\cref{#1}}}
\newcommand{\reffig}[1]{{\cref{#1}}}
\newcommand{\refprop}[1]{{\cref{#1}}}
\newcommand{\refapp}[1]{{\cref{#1}}}
\newcommand{\refrem}[1]{{\cref{#1}}}
\newcommand{\rank}{\operatorname{rank}}
\newtheorem{remark}[theorem]{Remark}
\numberwithin{equation}{section}
\numberwithin{figure}{section}
\numberwithin{table}{section}
\numberwithin{theorem}{section}
\title{A Riemannian trust region method for the\\ canonical tensor rank approximation problem\thanks{Submitted to the editors.\funding{The first author was partially supported by DFG research grant BU 1371/2-2. The second author was supported by a Postdoctoral Fellowship of the Research Foundation--Flanders (FWO).}}
}
\author{
Paul Breiding\thanks{Max-Planck-Institute for Mathematics in the Sciences Leipzig. (\email{breiding@mis.mpg.de})}
\and
Nick Vannieuwenhoven\thanks{KU Leuven, Department of Computer Science. (\email{nick.vannieuwenhoven@cs.kuleuven.be})}}
\begin{document}
\allowdisplaybreaks

\maketitle

\begin{abstract}
The canonical tensor rank approximation problem (TAP) consists of approximating a real-valued tensor by one of low canonical rank, which is a challenging non-linear, non-convex, constrained optimization problem, where the constraint set forms a non-smooth semi-algebraic set. We introduce a Riemannian Gauss--Newton method with trust region for solving small-scale, dense TAPs. The novelty of our approach is threefold. First, we parametrize the constraint set as
the Cartesian product of Segre manifolds, hereby formulating the TAP as a Riemannian optimization problem, and we argue why this parametrization is theoretically a good choice. Second, an original ST-HOSVD-based retraction operator is proposed. Third, we introduce a hot restart mechanism that efficiently detects when the optimization process is tending to an ill-conditioned tensor rank decomposition and which often yields a quick escape path from such spurious decompositions. Numerical experiments show improvements of up to three orders of magnitude in terms of the expected time to compute a successful solution over existing state-of-the-art methods.
\end{abstract}

\begin{keywords}
CP decomposition, Riemannian optimization, trust region method, Gauss--Newton method, ST-HOSVD retraction, hot restarts
\end{keywords}

\begin{AMS}
15A69, 53B21, 53B20, 65K10, 90C53, 14P10, 65Y20, 65F35
\end{AMS}



\addtolength{\abovedisplayskip}{-2.5pt}
\addtolength{\belowdisplayskip}{-2.5pt}

\section{Introduction}\label{sec_introduction}
A \emph{simple} or \emph{rank-$1$ tensor} is the tensor product of vectors. Identifying tensors in coordinates with the $d$-arrays representing them with respect to some basis, the tensor product is given explicitly by the \emph{Segre map}, namely
\begin{align}\label{segre_map}
 \otimes: \R^{n_1} \times \R^{n_2} \times \cdots \times \R^{n_d} &\to \R^{n_1 \times n_2 \times \cdots \times n_d}\\ (\sten{a}{}{1}, \sten{a}{}{2}, \ldots, \sten{a}{}{d})
 &\mapsto
 \begin{bmatrix} a_{i_1}^{(1)} a_{i_2}^{(2)} \cdots a_{i_d}^{(d)} \end{bmatrix}_{i_1,i_2,\ldots,i_d=1}^{n_1,n_2,\ldots,n_d},\nonumber
\end{align}
where $\sten{a}{}{k} = [a_{i}^{(k)}]_{i=1}^{n_k}$. The image of this map comprises the simple tensors.
The \emph{tensor rank decomposition} or \emph{canonical polyadic decomposition} (CPD) that was proposed by Hitchcock \cite{Hitchcock1927} expresses $\tensor{A} \in \R^{n_1 \times n_2 \times \cdots \times n_d}$ as a linear combination of simple tensors:
\begin{equation}\tag{CPD}\label{CPD}
 \tensor{A} = \sum_{i=1}^r \sten{a}{i}{1} \otimes \sten{a}{i}{2} \otimes \cdots \otimes \sten{a}{i}{d}, \quad \sten{a}{i}{k} \in \R^{n_k}.
\end{equation}
The smallest $r$ such that a decomposition in \refeqn{CPD} is possible, is called the (canonical) \emph{rank} of $\tensor{A}$ and we denote it by $\rank(\tensor{A})$.

The set of all tensors of rank bounded by $r$ is then given by
\[
 \sigma_r := \{ \tensor{A} \in \R^{n_1 \times n_2 \times \cdots \times n_d}  \;|\; \rank(\tensor{A}) \le r \}.
\]
We say that $\sigma_r$ is \emph{non-defective} if on a dense subset of $\sigma_r$, in the Euclidean topology, a tensor only admits finitely many CPDs. \emph{Throughout this paper, we assume that $\sigma_r$ is non-defective.} This is not a serious limitation. Provided that $r < \frac{n_1 n_2 \cdots n_d}{n_1 + n_2 + \cdots + n_d - d + 1}$, an even stronger property holds \cite{COV2014,COV2017}: with few exceptions, there exists a Euclidean-dense subset $U$ of $\sigma_r$ such that every $\tensor{A} \in U$ is \emph{$r$-identifiable}, meaning that the summands in the factorization in \refeqn{CPD} are even \emph{uniquely determined}.

The CPD arises naturally in a myriad of applications; see, for example, those listed in \cite{Review2017}.
An extensive class where potentially high-rank CPDs are computed from dense tensors originates in machine learning and algebraic statistics, where the parameters of certain latent structure models admit a CPD \cite{GSS2005,SH2005,AMR2009,AGHKT2014}. Such models were recently surveyed in a unified tensor-based framework \cite{AGHKT2014}, including exchangeable single topic models, na\"ive Bayes models, hidden Markov models, and Gaussian mixture models.

Mostly, the tensors that one works with are only approximations of a theoretical low-rank tensor, as a consequence of measurement and representation errors or due to numerical computations. This is why the \emph{tensor rank approximation problem} (TAP), which consists of approximating a given tensor $\tensor{B} \in \R^{n_1 \times n_2 \times \cdots \times n_d}$ by a tensor $\tensor{A}$ of low canonical rank, i.e.,
\begin{equation}\tag{TAP}\label{TAP}
  \min_{\tensor{A} \in \sigma_r} \frac{1}{2} \| \tensor{A} - \tensor{B} \|^2_F,
\end{equation}
is the usual optimization problem that has to be solved. There are two standard assumptions that we make throughout this paper. First, we seek only one solution of this problem, rather than all solutions. Second, for some inputs $\tensor{B}$ this optimization problem is \textit{ill-posed} in the sense that only an \emph{infimum} exists. Unfortunately, the set of all such $\tensor{B}$ can have positive Lebesgue measure; see de Silva and Lim \cite{dSL2008}.

The setting that we are specifically interested in consists of \emph{dense} input tensors $\tensor{B}$ that can be well approximated by a tensor of small rank $r$ whose \emph{condition number} \cite{BV2017} (see \refsec{sec_condition_number}) is at most moderately large. In particular, we only consider the case of strictly subgeneric ranks $r < \frac{n_1 \cdots n_d}{n_1+\cdots+n_d-d+1}$.

\begin{remark}[Tucker compression] \label{rem_tucker_compression}
It is known that if a tensor $\tensor{B} \in \R^{n_1 \times \cdots \times n_d}$ of rank $r$ admits an orthogonal Tucker decomposition \cite{Tucker1966}
\[
\tensor{B} = (Q_1, \ldots, Q_d) \cdot \tensor{S} := \sum_{i_1=1}^{r_1} \cdots \sum_{i_d=1}^{r_d} s_{i_1, \ldots, i_d} \sten{q}{i_1}{1} \otimes \cdots \otimes \sten{q}{i_d}{d}
\]
with core tensor $\tensor{S} \in \R^{r_1 \times \cdots \times r_d}$ and orthonormal factor matrices $Q_k = [\sten{q}{i}{k}]_{i=1}^{r_k} \in \R^{n_k \times r_k}$, then one of $\tensor{B}$'s CPDs is $\tensor{B} = \sum_{i=1}^r (Q_1 \sten{s}{i}{1}) \otimes \cdots \otimes (Q_d \sten{s}{i}{d})$ where $\tensor{S} = \sum_{i=1}^r \sten{s}{i}{1} \otimes \cdots \otimes \sten{s}{i}{d}$ is a CPD of the core tensor $\tensor{S}$. The algorithm developed in this paper also applies when an orthogonal Tucker decomposition of $\tensor{B}$ is provided as input. In this case it suffices to compute a CPD of the (dense) core tensor $\tensor{S}$. This is useful when $r_i \ll n_i$ as it decreases the computational complexity dramatically; see \refapp{app_implementation}.

In practice, if $\tensor{B}$ can be well-approximated by a tensor $\tensor{A}$ of rank $r$, then a standard strategy for solving TAPs consists of first constructing an orthogonal Tucker decomposition of multilinear rank componentwise bounded by $(r, \ldots, r)$, and then solving the TAP for the core tensor. Since efficient algorithms for Tucker compression exist \cite{OST2008,DM2007,CC2010,Saibaba2016}, the key variable determining the computational cost for solving TAPs is typically the size of the approximation rank $r$ rather than the dimensions $n_i$.
\end{remark}

Several algorithms were proposed in the literature for solving \refeqn{TAP}. The state of the art can broadly be divided into two classes: alternating least squares (ALS) methods and general numerical optimization methods.
While in 2008 the review article \cite{KB2009} considered ALS (see \cite{Harshman1970,Caroll1970,CHLZ2012}) as ``the `workhorse' algorithm for~CP,'' we believe that nowadays quasi-Newton methods, such as those in \cite{Acar2011,Hayashi1982,Paatero1999,Paatero1997,Phan2013a,Sorber2013a,Tomasi2006,dSM2013}, have supplanted ALS-like strategies. For example, the default algorithm in the Tensorlab v3 software package \cite{Tensorlab} for finding an approximate CPDs is \texttt{nls\_gndl}, which is a Gauss--Newton (GN) method with trust region.
Furthermore, experiments in \cite{Acar2011,Sorber2013a,Phan2013a,Tomasi2006} have demonstrated that modern implementations of GN methods as described in \refsec{sec_trd} outperform standard ALS methods on all but the simplest of problems in terms of execution time and iteration count. For this reason, we do not consider ALS in the rest of this paper.

Instead, we adopt the \emph{Riemannian optimization} framework \cite{AMS2008} to solve \refeqn{TAP}. The set
$\sigma_r$ is known to be semi-algebraic \cite{dSL2008,BCR1998}, entailing that it is locally diffeomorphic to a Euclidean space at most, but not all, points.\footnote{More precisely, if $\sigma_r$ is of dimension $d$, then by ``most points'' we mean the locus $U \subset \sigma_r$ of \emph{$d$-dimensional smooth points} of $\sigma_r$: let $\sigma_r = \bigcup_{i=1}^k \Var{M}_i^{d_i}$ be a Nash stratification \cite[Proposition 2.9.10]{BCR1998}, where $\Var{M}_i^{d_i}$ is a $d_i$-dimensional Nash submanifold, and let $S = \{ i \;|\; d_i = d \}$. Then, $U := \bigcup_{j\in S} \Var{M}_j^d$, so that $U$ is locally (Nash) diffeomorphic to $\R^d$.}
Riemannian optimization methods \cite{AMS2008} exploit such a local smooth structure on the domain for effectively solving optimization problems. Such methods were already proposed for other tensor decompositions, like Tucker, hierarchical Tucker, and tensor train decompositions; see respectively \cite{SL2010,Ishteva2011}, \cite{UV2013,dSH2015}, and \cite{HRS2012,Steinlechner2016,KSV2014}. Hitherto, a Riemannian optimization method is lacking for the TAP. This paper develops such a method.

\subsection{Contributions}
The main contribution of this paper is an efficient Riemannian trust region (RTR) method for finding well-conditioned \cite{V2017,BV2017} solutions of \refeqn{TAP}. The numerical experiments in \refsec{sec_numerical_experiments} show that this method can outperform state-of-the-art classic optimization methods on the key performance criterion, namely the expected time to success, by up to three orders of magnitude.

The design of our method was guided by a geometrical analysis of the TAP; see \cref{sec_tap_analysis,sec_ill_H_p}.
We argue that none of the state-of-the-art general optimization methods that were adapted to the TAP exploit the essential geometry of the optimization domain (see \refsec{sec_trd}).
This analysis yields three main insights: first, it suggests that the parametrization of the domain of the optimization problem should be \emph{homothetical}; second, parameterizing $\sigma_r$ via the manifold of rank-one tensors is probably among the best choices and allows us to formulate the TAP as a Riemannian optimization problem; and, third, certain CPDs are ill-behaved for (Riemannian) Gauss--Newton methods, hence we argue that they should be avoided, e.g., via \emph{hot restarts} (see \refsec{sec_hot_restarts}). This last point motivates the name for our method: Riemannian Gauss--Newton method with hot restarts, or RGN-HR for short.

Unfortunately, the main strength of RGN-HR is also its significant limitation: by design it can find only well-conditioned optimizers of \refeqn{TAP}. In particular, our method can only find \emph{isolated} solutions. We believe that this is not a severe limitation in practice because in many applications only (locally) unique rank-one summands are of interest. We say that non-isolated solutions are \emph{ill-posed} or, equivalently, that their condition number is unbounded; {see} \refprop{prop_curve_is_illcond}.

\subsection{Outline}
The rest of the paper is structured as follows. In the next section, we recall the condition number of the CPD.
In \refsec{sec_trd}, we analyze the geometry of the \reftag{TAP}, argue that this structure has not been exploited hitherto, and explain how it can be exploited in a Riemannian optimization method. \Cref{sec_RGN} describes the outline of RGN-HR. The strategy {for dealing with ill-conditioned Hessian approximations} is described in greater detail in \cref{sec_hot_restarts}. Riemannian optimization methods require the specification of a retraction operator. We explain our choice, a product ST-HOSVD retraction, in \cref{sec_retraction}. Numerical experiments demonstrating the efficacy of RGN-HR are featured in \refsec{sec_numerical_experiments}, and \cref{sec_application} illustrates the application of our method on a tensor originating in fluorescence spectroscopy. The final section presents our conclusions and outlook.

\subsection{Notation}
For ease of reading, vectors are typeset in lower-case boldface letters~($\vect{x}$); matrices in upper-case letters ($A$); tensors in upper-case calligraphic letters ($\tensor{A}, \tensor{B}$); and varieties and manifolds in an alternative calligraphic font ($\Var{S}, \Var{M}$).

The \emph{vectorization} of a tensor $\tensor{A} = [a_{i_1,\ldots,i_d}] \in \R^{n_1 \times \cdots \times n_d}$ is defined in the usual way as $\operatorname{vec}(\tensor{A}) := \left[\begin{smallmatrix} a_{1,\ldots,1} & a_{2,1,\ldots,1} & \cdots & a_{n_1,\ldots,n_{d-1},n_d-1} & a_{n_1,\ldots,n_d} \end{smallmatrix}\right]^T$.
A real vector space $\R^n$ is endowed with the Euclidean inner product $\langle \vect{x}, \vect{y} \rangle = \vect{x}^T \vect{y}$. This induces the Euclidean norm $\lVert \vect{x} \rVert := \sqrt{\langle \vect{x}, \vect{x} \rangle}$.
The corresponding \emph{spectral norm} of a matrix $M\in\R^{m\times n}$ is denoted by $\| M \|_2:=\max_{\vect{x}\in\R^n, \lVert \vect{x} \rVert=1} \lVert M \vect{x} \rVert.$ {The $n$-dimensional unit sphere in $\R^{n+1}$ is denoted by $\mathbb{S}^n$.} We make $\R^{n_1 \times n_2 \times \cdots \times n_d}$ a normed vector space by defining the norm of a tensor $\tensor{A}$ to be the \emph{Frobenius norm} $\lVert \tensor{A}\rVert_F := \|\operatorname{vec}(\tensor{A})\|$.

The Moore--Penrose \emph{pseudoinverse} of $M$ \cite[chapter 3, section 1]{MPT} is denoted by $M^\dagger$. We denote by $\varsigma_{\max}(M), \varsigma_{\min}(M)$, and $\varsigma_k(M)$, the largest, smallest, and $k$th largest singular value, respectively, of the linear operator or matrix $M$.

The set of rank-$1$ tensors in $\R^{n_1 \times n_2 \times \cdots \times n_d}$ is a smooth manifold called the \emph{Segre manifold}\footnote{It is standard to call the projective variety of which $\Var{S}$ is the cone the \emph{Segre variety}. To avoid confusion and for brevity, we prefer ``Segre manifold'' to ``affine cone over the Segre variety.''} \cite[section 4.3.5]{Landsberg2012}, which we denote by $\Var{S}_{n_1,\ldots,n_d}$. Moreover, we define
\begin{equation*} 
\Sigma = \sum_{k=1}^d (n_k - 1)\quad\text{and}\quad \Pi = \prod_{k=1}^d n_k,
\end{equation*}
such that $\dim(\Var{S}_{n_1,\ldots,n_d})=\Sigma+1$ and $\dim (\R^{n_1 \times n_2 \times \cdots \times n_d}) = \Pi$. We often abbreviate $\Var{S}:=\Var{S}_{n_1,\ldots,n_d}$ when the tuple of integers $(n_1,\ldots,n_d)$ is clear from the context.

Let $\Var{M}$ be a manifold and $x \in \Var{M}$. The \emph{tangent space} to $\Var{M}$ at $x$ is denoted by $\Tang{x}{\Var{M}}$, and the \emph{tangent bundle} of $\Var{M}$ is denoted by $\Var{T}\Var{M}$; see Lee \cite{Lee2013}.

\section{The condition number of CPDs} \label{sec_condition_number}
We recall the condition number of the CPD from \cite{BV2017}. It plays a pivotal role in the analyses in \cref{sec_trd,sec_hot_restarts} which led to the main insights for improving the state-of-the-art optimization methods.

The \emph{Segre manifold} in $\R^{n_1  \times \cdots \times n_d}$ is denoted by $\Var{S}:=\Var{S}_{n_1,\ldots,n_d}$. It can be obtained as the image of the Segre map, as defined in \refeqn{segre_map}, after removing the zero tensor:
$$
\Var{S} = \{ \sten{a}{ }{1} \otimes \cdots \otimes \sten{a}{ }{d} \mid \sten{a}{ }{k}\in\R^{n_k} \} \setminus \{0\}.
$$
The set of tensors of rank at most $r$ is then defined as the image of the \emph{addition map}
\begin{equation}\label{Phi}
\Phi: \Var{S}^{\times r}:= \Var{S}\times \cdots \times \Var{S} \to \R^{n_1\times \cdots \times n_d},\;
(\tensor{B}_1, \ldots, \tensor{B}_r) \mapsto \tensor{B}_1 + \cdots + \tensor{B}_r.
\end{equation}
In \cref{sec_introduction}, we denoted it by $\sigma_r$, but for emphasizing the dependence on~$\Var{S}$ we will write
$
\sigma_r(\Var{S}):=\Phi(\Var{S}^{\times r}).
$
The derivative of $\Phi$ at $\tuple{p}=(\tensor{B}_1,\ldots,\tensor{B}_r)\in \Var{S}^{\times r}$ is
\[
\deriv{\tuple{p}}{\Phi} : \Tang{\tensor{B}_1}{\Var{S}} \times \cdots \times \Tang{\tensor{B}_r}{\Var{S}} \to \Tang{\Phi(\tuple{p})}{\R^\Pi}, \; (\dot{\vect{p}}_1, \ldots, \dot{\vect{p}}_r ) \mapsto \dot{\vect{p}}_1 + \cdots + \dot{\vect{p}}_r.
\]

The \emph{(local) condition number} \cite{BC2013,Higham1996} of the CPD at $\tuple{p}$ is
\begin{equation*}
\kappa (\tuple{p}, \tensor{A}) := \lim_{\epsilon \to 0} \; \max_{\tensor{A}' \in (B_{\epsilon}(\tensor{A}) \cap \sigma_r(\Var{S}))} \frac{\| \Phi_{\tuple{p}}^{-1}(\tensor{A}) - \Phi_{\tuple{p}}^{-1}(\tensor{A}') \|_F}{\| \tensor{A} - \tensor{A}' \|_F},
\end{equation*}
where $B_\epsilon(\tensor{A})$ is an $\epsilon$-ball centered at $\tensor{A}$, if there is a local inverse function $\Phi_{\tuple{p}}^{-1}$ of $\Phi$ at $\tuple{p}$; otherwise, the local condition number is defined as $\kappa (\tuple{p}, \tensor{A}):=\infty$. The norm in the numerator of the above definition is the Euclidean norm on $(\R^{n_1 \times \cdots \times n_d})^{\times r} \simeq \R^{\Pi \times r}$; that is, if $\tuple{p}' =(\tensor{B}_1',\ldots,\tensor{B}_r') \in \Var{S}^{\times r}$, then
\[
 \| \tuple{p} - \tuple{p}' \|_{F} := \bigl\| \begin{bmatrix} \operatorname{vec}(\tensor{B}_1 - \tensor{B}_1') & \cdots & \operatorname{vec}(\tensor{B}_r - \tensor{B}_r') \end{bmatrix} \bigr\|_F.
\]%
Note that $\kappa (\tuple{p}, \tensor{A})$ is completely determined by the choice of $\tuple{p}$. We therefore set $\kappa(\tuple{p}):=\kappa (\tuple{p}, \tensor{A})$ and call it the \emph{condition number of the decomposition $\tuple{p}$}.
In \cite{BV2017} we showed that it is the inverse of the smallest singular value of $\deriv{\tuple{p}}{\Phi}$:
\begin{equation}\label{kappa}
\kappa(\tuple{p})=\frac{1}{\varsigma_{n}(\deriv{\tuple{p}}{\Phi})}, {\quad\text{where } n := r \cdot \dim \Var{S} = r (\Sigma+1).}
\end{equation}
By the assumption that $\sigma_r$ is non-defective we have $\dim \Var S^{\times r} = \dim \sigma_r(\Var S)$. Hence, the condition number is finite on a dense subset of $\Var S^{\times r}$; this follows from \cite[proof of Theorem 1.1]{BV2017}. If $\tensor{A}\in\sigma_r$ is $r$-identifiable then all $\tuple{p}\in\Phi^{-1}(\tensor{A})$ admit the same condition number \cite{BV2017}. In this case it makes sense to speak about the \emph{condition number of $\tensor{A}$}.

Recall that the condition number describes the first-order behavior of the local inverse of~$\Phi$ at $\tuple{p}$, if it exists. It thus measures the \emph{sensitivity} of $\tuple{p}$ with respect to perturbations of $\tensor{A}=\Phi(\tuple{p})$ within $\sigma_r(\Var S)$.
A large condition number means that a small perturbation $\tensor{A}' \approx \tensor{A}$ can result in a {strongly perturbed} CPD $\tuple{p}' = (\tensor{B}_1',\ldots,\tensor{B}_r') \in \Var{S}^{\times r}$. This is because we have the asymptotically sharp bound
\(
\| \tuple{p}' - \tuple{p}\|_{F} \lesssim \kappa(\tuple{p}') \cdot \| \tensor{A}' - \tensor{A} \|_F.
\)
What about small condition numbers? Recall from \cite{BV2017} that the relative error between $\tensor{B}_i'$ in $\tuple{p}'$ and $\tensor{B}_i$ in $\tuple{p} = (\tensor{B}_1,\ldots,\tensor{B}_r) \in \Var{S}^{\times r}$ is asymptotically bounded by
\begin{equation*} 
 \frac{\|\tensor{B}_i' - \tensor{B}_i\|_F}{\|\tensor{B}_i'\|_F} \lesssim \kappa_i^{\text{rel}}(\tuple{p}') \frac{\|\tensor{A}'-\tensor{A}\|_F}{\|\tensor{A}'\|_F}, \text{ where }
 \kappa_i^{\text{rel}}(\tuple{p}') := \kappa(\tuple{p}') \frac{\|\tensor{A}'\|_F}{\|\tensor{B}_i'\|_F}.
\end{equation*}
If $\kappa(\tuple{p}') \|\tensor{A}'-\tensor{A}\|_F > \|\tensor{B}_i'\|_F$, then this ``upper bound'' is larger than~$1$; hence, interpreting $\tensor{B}_i'$ in an application is nonsensical without further analysis. Note that \emph{a CPD with a small condition number $\kappa(\tuple{p}')$ can contain rank-$1$ terms $\tensor{B}_i'$ that are well-conditioned}, i.e., $\|\tensor{B}_i'\|_F \approx \|\tensor{A}'\|_F$, \emph{and others that are ill-conditioned}, i.e., $\|\tensor{B}_i'\|_F \ll \|\tensor{A}'\|_F$.

For the above reasons, in applications, where one naturally faces approximate data, a small condition number is required {to ensure that at least some rank-$1$ tensors in the computed CPD $\tuple{p}'$ are close to the corresponding terms in the} true CPD $\tuple{p}$.
Following \cite{BV2017}, we say that the TAP is \emph{ill-posed} at the CPD $\tuple{p} \in \Var{S}^{\times r}$ if $\kappa(\tuple{p}) = \infty$.
\emph{In this paper, we deal exclusively with CPDs that are not ill-posed solutions of \refeqn{TAP}.} That is, we always assume that a (local) optimizer $\tuple{p}$ of \refeqn{TAP} has $\kappa(\tuple{p}) < \infty$. To the best of our knowledge, RGN-HR is the first method that exploits this property.

\section{Analysis of the TAP} \label{sec_trd}
In this section we explain how the approach for solving \refeqn{TAP} in this article differs from existing methods. First, we summarize the state of the art, then recall the Riemannian optimization framework from \cite{AMS2008} and propose our Riemannian formulation of the TAP, next consider the geometry behind the TAP to motivate theoretically why we think that our formulation is preferable, and finally argue that the local convergence rate estimates of the Riemannian formulation are superior to those of state-of-the-art optimization methods in certain cases.

\subsection{State of the art} \label{sec_stateoftheart}
All of the optimization methods that we know of, i.e., \cite{Acar2011,Harshman1970,Caroll1970,Hayashi1982,Paatero1999,Paatero1997,Phan2013,Phan2013a,Sorber2013a,Tomasi2006,DeSterck2012,dSM2013,CHLZ2012,LUZ2015}, choose a standard parameterization of tensors of rank bounded by $r$, called \emph{factor matrices}. That is, a tensor $\tensor{A}$ is represented as
\begin{align} \label{factor_matrices_def}
 \tensor{A} := {\text\textlbrackdbl} A_1, A_2, \ldots, A_d {\text\textrbrackdbl} := \sum_{i=1}^r \sten{a}{i}{1}\otimes\sten{a}{i}{2}\otimes\cdots\otimes \sten{a}{i}{d},
\end{align}
where $A_k := [\sten{a}{i}{k}]_{i=1}^r \in \R^{n_k\times r}$.
The TAP is then formulated as the following unconstrained optimization problem over $D := \R^{n_1 \times r} \times \cdots \times \R^{n_d \times r}$,
\begin{align}\label{eqn_optimization_problem_classic}
\min_{(A_1,\ldots,A_r)\in D} \,\frac{1}{2} \bigl\| {\text\textlbrackdbl} A_1, \ldots, A_d {\text\textrbrackdbl} - \tensor{B} \bigr\|_F^2,
\end{align}
which can be handled with traditional methods. In the remainder, we restrict our discussion to the widespread, effective GN methods for this problem, such as those proposed in  \cite{Acar2011,Hayashi1982,Paatero1999,Paatero1997,Phan2013a,Sorber2013a,Tomasi2006}.

A key step in GN methods is (approximately) solving the normal equations
\(
 (J^T J) \vect{p} = -J^T \vect{r},
\)
where $\vect{r} = \operatorname{vec}\bigl( {\text\textlbrackdbl} A_1, A_2, \ldots, A_d {\text\textrbrackdbl} - \tensor{B} \bigr)$ is the residual vector and the Jacobian matrix $J \in \R^{\Pi \times r(\Sigma+d)}$ has the following well-known block structure:
\begin{align}\label{eqn_overparam_jacobian}
J =
\begin{bmatrix}
[I_{n_1} \otimes \sten{a}{i}{2} \otimes \cdots \otimes \sten{a}{i}{d}]_{i=1}^r & \cdots & [\sten{a}{i}{1} \otimes \cdots \otimes \sten{a}{i}{d-1} \otimes I_{n_d}]_{i=1}^r
\end{bmatrix}.
\end{align}
The matrix $J^T J$ is the GN approximation of the Hessian of the objective function in \refeqn{eqn_optimization_problem_classic}. Given the (approximate) solution $\vect{p}$ of the normal equations, the next iterate is then determined either by a line or plane search, or via the trust region framework.

A complication is that the Gram matrix $J^TJ$ is never of full rank for geometrical reasons, so that the normal equations are ill-posed; see \cite[section 2]{V2017} and \refsec{sec_tap_analysis} below. The aforementioned methods differ in how they resolve this problem; for example, it can be solved by adding a regularization term, via a truncated conjugate gradient (CG) method, or by computing the Moore--Penrose pseudoinverse.

\begin{remark}\label{rem_numerical_accuracy}
For reasons of numerical accuracy it is in general not recommended to solve the normal equations; rather one usually solves the equivalent least-squares problem $J \vect{p} = -\vect{r}$. However, in the case of the TAP, both the Jacobian matrix $J$ and its Gram matrix $J^T J$ are very structured. The usual $QR$-factorization in a least-squares solver destroys this structure, resulting in significantly slower solves and excessive memory consumption. For this reason, the references \cite{Acar2011,Hayashi1982,Paatero1999,Paatero1997,Phan2013a,Sorber2013a,Tomasi2006} prefer solving the normal equations rather than the least-squares problem.
\end{remark}

\subsection{A Riemannian formulation} \label{sec_riemannian_formulation}
Classic optimization methods can be generalized to the framework of \textit{Riemannian optimization} \cite{AMS2008} for solving optimization problems where the constraint set $\Var{M}$ is a \emph{Riemannian manifold} \cite[chapter 13]{Lee2013}. A particular instance of a Riemannian optimization problem that is of relevance to this paper is the least-squares optimization problem
\begin{equation}\label{gauss-newton-least-squares}
  \min_{x \in \Var{M}}
 \,\frac{1}{2} \Vert F(x)\Vert^2,
\end{equation}
where $F:\Var{M}\to \R^M$ is a smooth {objective} function with $M\geq \dim \Var{M}$. Riemannian optimization methods are defined with respect to a \emph{Riemannian metric} \cite[chapter 13]{Lee2013} and a \emph{retraction operator} $R$ on $\Var{M}$; see \cite[section 4.1]{AMS2008}, and \refsec{sec_retraction} below. Since we will assume that $\Var{M}$ is embedded in $\R^N$, we take the metric induced by this ambient Euclidean space. The classic GN method generalizes to functions on manifolds; the outline of this Riemannian Gauss--Newton (RGN) method for solving \refeqn{gauss-newton-least-squares} from \cite[Algorithm 14]{AMS2008} is recalled as \refalg{RGN-framework}.

\begin{algorithm2e}[tb]\footnotesize
\SetAlgoLined
\KwData{Riemannian manifold $\Var M$; retraction $R$ on $\Var M$; function $F:\Var M \to \Var \R^M$.}
\KwIn{A starting point $x_0\in\Var{M}$.}
\KwOut{A sequence of iterates $x_k$.}
\For{$k = 0, 1, 2, \ldots$}{
Solve the GN equation $\bigl( (\deriv{x_k}{F})^* \circ (\deriv{x_k}{F}) \bigr)\;\eta_k = - (\deriv{x_k}{F})^* \bigl( F(x_k) \bigr)$
for the unknown $\eta_k\in\Tang{x_k}{\Var{M}}$\;
Set $x_{k+1} \leftarrow R_{x_k}(\eta_k)$\;
}
\caption{Riemannian Gauss-Newton method}
\label{RGN-framework}
\end{algorithm2e}

One might hope that the TAP could be formulated as in \refeqn{gauss-newton-least-squares} with domain $\Sec{r}{\Var{S}}$; alas, this set is \emph{not} a manifold. However, the product $\Var{S}^{\times r}$ is a manifold.
We propose parameterizing $\sigma_r(\Var{S})$ via the addition map~$\Phi$, so that we can formulate the TAP as
\begin{equation}\tag{TAP*}\label{TAP3}
 \min_{\tuple{p}\in\Var{S}^{\times r}} \frac{1}{2} \| \Phi(\tuple{p}) - \tensor{B} \|_F^2.
\end{equation}
Assuming that \refeqn{TAP} is well posed, it is clear that $\tuple{p} \in \Var{S}^{\times r}$ is a global minimizer of \refeqn{TAP3} if and only if $\Phi(\tuple{p})$ is a global minimizer of \refeqn{TAP}.
It is also evident that every $\tuple{p}$ such that $\Phi(\tuple{p})$ is a local minimizer of \refeqn{TAP} is a local minimizer of \refeqn{TAP3}. However, while we have no explicit examples, we expect based on considerations about $r$-identifiability that the converse is false at least for some $\tensor{B} \in \R^{n_1 \times \cdots \times n_d}$; that is, the formulation \refeqn{TAP3} may introduce additional local minima.

\subsection{Analysis}\label{sec_tap_analysis}
The formulation as Riemannian optimization problem in \refeqn{TAP3} distinguishes the proposed approach from the state-of-the-art methods described in \cref{sec_stateoftheart} which solve \refeqn{eqn_optimization_problem_classic} instead.
In this subsection, we interpret both approaches as special cases of a family of possible Riemannian formulations of the TAP. We argue that in this family \emph{the formulation \refeqn{TAP3} is theoretically a good choice when a RGN method is utilized for solving it}, the condition number of the obtained solution is small, and orthonormal bases are selected for the tangent spaces.

The parameterizations of $\sigma_r(\Var{S})$ via factor matrices in $\R^{n_1 \times r} \times \cdots \times \R^{n_d \times r}$ and via rank-$1$ tensors in $\Var{S}^{\times r}$ are both special cases of the following scenario. Let $\Var E$ be a Riemannian manifold with $\dim \Var{E} \geq \dim \Var{S}^{\times r}$, and let
\(
\Psi: \Var E \rightarrow \sigma_r(\Var{S})
\)
be a smooth, surjective parametrization of $\sigma_r(\Var{S})$ with corresponding optimization problem
\begin{equation}\label{optimization_problem_classic}
  \min_{x \in \Var{E}}\,\frac{1}{2} \|\Psi(x) - \tensor{B} \|_F^2.
\end{equation}
We furthermore assume that there is a smooth map $\pi:\Var{E} \to \Var{S}^{\times r}$ so that the diagram
$$
\xymatrix{
\Var E  \ar[dr]_\Psi \ar[r]^\pi&\Var{S}^{\times r}\ar[d]^\Phi\\
 &\sigma_r(\Var{S})}
$$
commutes, where $\Phi$ is the addition map from \refeqn{Phi}. We believe that a meaningful parametrization of $\sigma_r(\Var{S})$ should supply a method $\pi$ for obtaining the collection of rank-$1$ summands in \refeqn{CPD}, as they are often of interest in applications. This explains why the assumption on the existence of $\pi$ is justified.

In the case of factor matrices, $\pi$ is {the columnwise Khatri--Rao product $\odot$, i.e., $\pi(A_1,\ldots,A_d) = A_1 \odot \cdots \odot A_d := [\sten{a}{i}{1}\otimes \cdots \otimes \sten{a}{i}{d}]_{i=1}^r$,} and $\Psi={\text\textlbrackdbl}\cdot {\text\textrbrackdbl}$ is the map from \refeqn{factor_matrices_def}. On the other hand, in our approach, $\pi$ is the identity map and $\Psi=\Phi$. Both the factor matrices and the parametrization over $\Var{S}^{\times r}$ may serve as guiding examples, but in the following discussion $(\Var E,\Psi)$ can be any suitable parametrization of $\sigma_r(\Var S)$.

Let $F(x)=\Psi(x) - \tensor{B}.$
After fixing a choice of orthonormal bases for $\Tang{x}{\Var{E}}$ and $\Tang{\Psi(x)}{\R^{n_1 \times \cdots \times n_d}}$, the RGN method applied to optimization problem \refeqn{optimization_problem_classic} solves, in coordinates, either the normal equations or a least squares problem, i.e., either
\begin{equation*} 
 (J^TJ) \eta = -J^T (\Psi(x) - \tensor{B})
 \;\text{ or }\;
 J \eta = - (\Psi(x) - \tensor{B}),
\end{equation*}
respectively, where {$J$ is the matrix of} $\deriv{x}{F}=\deriv{x}{\Psi}$ with respect to the chosen bases. The sensitivity of both computational problems depends {critically} on the \emph{effective condition number} of $J$, namely $\kappa_2(J) := \Vert J^\dagger \Vert_2\,\Vert J\Vert_2$; see, e.g., \cite[section 5.3.8]{matrix_computations}.

We now show that some parameterizations are inherently worse than others in the sense that $\kappa_2(J)$ will be large. To this end, we require the mild technical assumption that $\pi$ is a surjective \emph{submersion}: $\deriv{x}{\pi}$ is of maximal rank $n := \dim \Var{S}^{\times r}$ for all $x \in \Var{E}$. One verifies that the identity and Segre maps are surjective submersions.

Let $x\in \Var{E}$ and $\tuple p:=\pi(x)\in\Var{S}^{\times r}$.
Choose any orthonormal basis for $\Tang{\tuple{p}}{\Var{S}^{\times r}}$, and let~$P$, respectively $Q$,
be the matrix of $\deriv{x}{\pi}$, respectively $\deriv{\tuple{p}}{\Phi}$, with respect to the chosen bases on the three tangent spaces. Let $m := \dim \Var{E} \ge \dim \Var{S}^{\times r} = n$, then we have $J \in \R^{\Pi \times m}$, $P \in \R^{n \times m}$, and $Q \in \R^{\Pi \times n}$.
From the chain rule and the commuting diagram above it follows that $J = Q P$.
As in~\refeqn{kappa}, let $\kappa(\tuple p)$ denote the condition number of the CPD at $\tuple p$. Recall that the singular values of a linear operator are the singular values of any of its matrix representations with respect to orthonormal bases on the domain and image. Hence,
\(
 \kappa(\tuple{p}) = \bigl(\varsigma_n ( \deriv{\tuple{p}}{\Phi} ) \bigr)^{-1} = \bigl(\varsigma_n ( Q ) \bigr)^{-1}.
\)
As explained before, we assume that $\kappa(\tuple p) <\infty$,
so that it follows that $Q$ is injective and thus has a left inverse $Q^\dagger$.
Moreover, $P$ has full row rank because we assumed $\deriv{x}{\pi}$ is a submersion, so that $P$ has a right inverse $P^\dagger$. We find $JP^\dagger = Q$ with pseudoinverse $PJ^\dagger = Q^\dagger$. On the other hand, we also have $Q^\dagger J = P$ and $J^\dagger Q = P^\dagger$. From the submultiplicativity of the spectral norm, we then derive
$\Vert Q\Vert \leq \Vert J\Vert\Vert P^\dagger\Vert$,
$\Vert Q^\dagger\Vert \leq \Vert P\Vert\Vert J^\dagger\Vert$,
$\| P \| \le \| J \| \|Q^\dagger\|$, and
$\| P^\dagger \| \le \|J^\dagger\| \|Q\|$.
Combining the first and second bounds, and the third and fourth bounds yields, respectively,
\begin{align} \label{eqn_lower_bound}
\kappa_2(Q) \le \kappa_2(J) \kappa_2( P )
\quad\text{and}\quad
\kappa_2( P ) \le \kappa_2(J) \kappa_2(Q).
\end{align}
We can write these condition numbers in terms of the singular values of the derivative operators, namely
\begin{align} \label{eqn_kappa_intrinsic}
\kappa_2(P) = \frac{\varsigma_1(\deriv{x}{\pi})}{\varsigma_n(\deriv{x}{\pi})},
\quad
\kappa_2(Q) = \frac{\varsigma_1(\deriv{\tuple{p}}{\Phi})}{\varsigma_n(\deriv{\tuple{p}}{\Phi})}
\;\text{ and }\;
\kappa_2(J) = \frac{\varsigma_1(\deriv{x}{\Psi})}{\varsigma_n(\deriv{x}{\Psi})},
\end{align}
which are intrinsic expressions that do not depend on the choice of bases.
For proceeding, we need the following lemma, which is proved in \refapp{app_proof_sthosvd_retraction}.
\begin{lemma}\label{norm_of_phi} The spectral norm of $\deriv{\tuple p}{\Phi}$ satisfies
$1\leq \Vert \deriv{\tuple p}{\Phi}\Vert \le \sqrt{r}$.
\end{lemma}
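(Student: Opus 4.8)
The plan is to read off both inequalities directly from the explicit form of $\deriv{\tuple p}{\Phi}$ as the summation map $(\dot{\vect p}_1,\ldots,\dot{\vect p}_r)\mapsto \dot{\vect p}_1+\cdots+\dot{\vect p}_r$. The key observation is that the domain $\Tang{\tensor{B}_1}{\Var S}\times\cdots\times\Tang{\tensor{B}_r}{\Var S}$ carries the \emph{product} Euclidean metric inherited from the definition of $\lVert\cdot\rVert_F$ on $\Var S^{\times r}$ in \refsec{sec_condition_number}, so that $\lVert(\dot{\vect p}_1,\ldots,\dot{\vect p}_r)\rVert^2=\sum_{i=1}^r\lVert\dot{\vect p}_i\rVert^2$. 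Consequently
\[
\Vert \deriv{\tuple p}{\Phi}\Vert = \max\Bigl\{\, \bigl\lVert \textstyle\sum_{i=1}^r \dot{\vect p}_i\bigr\rVert \;:\; \textstyle\sum_{i=1}^r \lVert\dot{\vect p}_i\rVert^2 = 1 \Bigr\}.
\]

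For the upper bound I would apply the triangle inequality followed by the Cauchy--Schwarz inequality,
\[
\Bigl\lVert \sum_{i=1}^r \dot{\vect p}_i\Bigr\rVert \;\le\; \sum_{i=1}^r \lVert\dot{\vect p}_i\rVert \;\le\; \sqrt{r}\,\Bigl(\sum_{i=1}^r \lVert\dot{\vect p}_i\rVert^2\Bigr)^{1/2},
\]
which, restricted to unit-norm tuples, yields $\Vert\deriv{\tuple p}{\Phi}\Vert\le\sqrt r$.

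For the lower bound I would simply exhibit one unit-norm tangent vector on which the map attains value $1$. Because $\dim\Var S=\Sigma+1\ge 1$, each tangent space $\Tang{\tensor{B}_i}{\Var S}$ is nonzero; choosing any $\dot{\vect p}_1$ with $\lVert\dot{\vect p}_1\rVert=1$ and forming the tuple $(\dot{\vect p}_1,0,\ldots,0)$ gives a unit element of the domain whose image is $\dot{\vect p}_1$, of norm $1$. Hence $\Vert\deriv{\tuple p}{\Phi}\Vert\ge 1$.

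There is no real obstacle here; the computation is essentially a single line. The only two points deserving attention are identifying the correct (product) metric on the domain, which is precisely what produces the factor $\sqrt r$ rather than a smaller constant, and noting that the Segre tangent spaces are nonzero so that the unit vector used in the lower bound exists. One may further observe that the value $\sqrt r$ is achieved precisely when the $\dot{\vect p}_i$ can be chosen as a common vector lying in every $\Tang{\tensor{B}_i}{\Var S}$, so that the upper bound is sharp in general and cannot be tightened without additional assumptions on $\tuple p$.
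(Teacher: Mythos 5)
Your proof is correct and follows essentially the same route as the paper's: the upper bound via the triangle inequality combined with $\|\vect{c}\|_1 \le \sqrt{r}\,\|\vect{c}\|$ on the vector of component norms, and the lower bound via a unit tangent tuple supported in a single factor. The only cosmetic difference is that the paper writes tangent vectors as $U_i \vect{x}_i$ with orthonormal bases $U_i$ of the $\Tang{p_i}{\Var{S}}$, which is just an explicit coordinate version of the product-metric observation you make.
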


Since $\|\deriv{\tuple{p}}{\Phi}\| = \varsigma_1( \deriv{\tuple{p}}{\Phi} )$ and $\varsigma_n( \deriv{\tuple{p}}{\Phi})  = \kappa(\tuple{p})^{-1}$, we derive from plugging \refeqn{eqn_kappa_intrinsic} into \refeqn{eqn_lower_bound}, and using \reflem{norm_of_phi} that
\begin{equation}\label{isometry_ineq2}
\max\left\{
\kappa(\tuple{p}) \cdot \frac{\varsigma_{n}(\deriv{x}{\pi})}{\varsigma_{1}(\deriv{x}{\pi})},\;
\frac{1}{\sqrt{r}} \cdot \left( \kappa(\tuple{p})\, \frac{\varsigma_{n}(\deriv{x}{\pi})}{\varsigma_{1}(\deriv{x}{\pi})} \right)^{-1}
\right\}
\leq \kappa_2(J).
\end{equation}
Note that the condition number $\kappa(\tuple p)$ should be interpreted as a constant here, because it is an inherent property of the decomposition $\tuple p$ that does not depend on the parametrization we choose for $\Var{S}^{\times r}$.

Based on the above lower bound, a natural idea is constructing a parameterization $\Var{E}$ with maps $\Psi$ and $\pi$ in such a way that the lower bound in \refeqn{isometry_ineq2} is minimized. However, we do not know how it should be constructed, if it exists at all. Therefore, we settle for a good choice instead. Recall from the previous section that decompositions $\tuple{p}$ with large condition number are often not of interest in applications, because the components cannot be interpreted rigorously due to their sensitivity to noise. We thus focus on those cases with a small $\kappa(\tuple{p})$, in which case it is clear that a small lower bound in \refeqn{isometry_ineq2} is obtained by choosing a parametrization $\pi$ with likewise small $\kappa_2(P)$. The minimal $\kappa_2(P) = 1$ and corresponds to a {\emph{homothetical}} parametrization $\pi : \Var{E} \to \Var{S}^{\times r}$; {that is,} $\varsigma_{1}(\deriv{x}{\pi})=\varsigma_{n}(\deriv{x}{\pi})$ for all $x\in\Var E$.
Based on this analysis, we propose the choice $\pi=\mathrm{id}$, yielding the lower bound $\kappa(\tuple{p}) \le \kappa_2(J)$.%

An example that may lead to a large lower bound in \refeqn{isometry_ineq2} is the parameterization via factor matrices with $\pi$ equal to the $r$-fold Segre map. In this case, the parameter space is $\Var{E} = \R^{n_1 \times r} \times \cdots \times \R^{n_d \times r}$. Assume that $(A_1, \ldots, A_d) \in \Var{E}$. Then, $\deriv{x}{\pi}$ with respect to the standard basis on $\Tang{x}{\Var{E}} \simeq \R^{n_1 r + \cdots + n_d r}$ and the standard basis on $\Tang{\pi(x)}{(\R^{\Pi})^{\times r}} \simeq \R^{r \Pi}$ is
\[
 T := \operatorname{diag}( T_1, \ldots, T_r ), \text{ where } T_i = \begin{bmatrix} I \otimes \sten{a}{i}{2} \otimes \cdots \otimes \sten{a}{i}{d} & \cdots & \sten{a}{i}{1} \otimes \cdots \otimes \sten{a}{i}{d-1} \otimes I \end{bmatrix}
\]
and $A_k = [\sten{a}{i}{k}]_{i=1}^r$. The matrix $T_i$ is what \cite{V2017} calls a \emph{Terracini matrix} corresponding to a rank-$1$ tensor. Its spectrum was completely characterized in \cite[corollary 18]{V2017} in the \emph{norm-balanced case}, i.e., the case whereby $\alpha_i := \|\sten{a}{i}{1}\| = \cdots = \|\sten{a}{i}{d}\|$. It follows from these results that $\varsigma_1(T_i) = \sqrt{d} \alpha_i^{d-1}$ and the smallest nonzero singular value is $\alpha_i^{d-1}$. From the block diagonality of $T$, it follows that $\frac{\varsigma_{1}(\deriv{x}{\pi})}{\varsigma_{n}(\deriv{x}{\pi})} = \sqrt{d} \bigl( \frac{\max_i \alpha_i}{\min_i \alpha_i} \bigr)^{d-1}$, which is large for all tensors with large differences in norm between the rank-$1$ tensors, yielding the potentially very large lower bound
\(
 \frac{\sqrt{d}}{\sqrt{r} \, \kappa(\tuple{p})} \bigl( \frac{\max_i \alpha_i}{\min_i \alpha_i} \bigr)^{d-1} \le \kappa_2( J ).
\)

\subsection{Convergence} \label{sec_convergence}
We showed in \cite{BV2017bis} that the multiplicative constant in the local linear rate of convergence of a RGN method near a non-exact local optimizer~$\tuple{p}^*$, i.e., $f(\tuple{p}^*)\ne0$, is a multiple of the square of the condition number $\kappa(\tuple{p}^*)$ in \refeqn{kappa}.

Likewise, it follows from the proof\footnote{Specifically the penultimate step in the penultimate formulae on page 208.} of Theorem 7 in \cite{DK2002} that the multiplicative constant in the local linear rate of convergence of GN methods to a non-exact solution~$x^*$ of the objective function $f(x) = \frac{1}{2}\|g(x)\|^2$ whose Jacobian $J_g$ has constant rank on an open neighborhood containing $x^*$ is a multiple of $\|J_g^\dagger\|_2^2$. Consequently, the local convergence behavior of the GN methods mentioned in \refsec{sec_stateoftheart} applied to the TAP is governed by $\|J^\dagger\|_2^2$ with $J$ as in \refeqn{eqn_overparam_jacobian}. In \cite{V2017} it was argued that $\kappa(\tuple{p}) := \|J^\dagger\|_2$ is naturally the condition number of the TAP in \refeqn{eqn_optimization_problem_classic} at the CPD $\tuple{p}$.

As we have shown at the end of the previous subsection, $\|J^\dagger\|_2$ with $J$ as in \refeqn{eqn_overparam_jacobian} will be large at $\tuple{p} = (\tensor{B}_1,\ldots,\tensor{B}_r)$ if there are large differences in the scaling factors $\alpha_i := \|\tensor{B}_i\|_F$ of the rank-$1$ terms in $\tuple{p}$. The convergence of GN methods will, at least theoretically, deteriorate with increasing $\frac{\max_i \alpha_i}{\min_i \alpha_i}$. However, since we proved in \cite[Proposition 5.1]{BV2017} that the condition number in \refeqn{kappa} is \emph{independent of the scaling of the rank-$1$ terms} appearing in the CPD $\tuple{p}$, i.e., $\kappa((t_1 \tensor{B}_1, \ldots, t_r \tensor{B}_r)) = \kappa(\tuple{p})$ for all $t_i\in\R\backslash\{0\}$, it follows that, theoretically, the local rate of convergence of RGN methods for \refeqn{TAP3} is not affected by large differences in scaling. \emph{These methods might thus significantly outperform the GN methods for \refeqn{eqn_optimization_problem_classic} for CPDs with large differences in scale}. This behavior is confirmed experimentally in \refsec{sec_numerical_experiments}.

\section{The proposed Riemannian trust region method}\label{sec_RGN}
The considerations of the preceding section and because we are not aware of any evident homothetical parametrization of $\Var{S}^{\times r}$ other than the trivial one, i.e., $\pi=\mathrm{id}$, motivated us to avoid optimizing over a flat parameter space. Instead we will optimize over the Riemannian manifold $\Var{S}^{\times r}$. To this end, let us write
$$
 f(\tuple{p}) = \frac{1}{2} \|F(\tuple{p})\|^2  \;\text{ with }
 F : \Var{S}^{\times r} \to \R^\Pi,\; \tuple{p} \mapsto \operatorname{vec}( \Phi(\tuple{p}) - \tensor{B} ),
$$
where $\tensor{B} \in \R^{n_1 \times \cdots \times n_d}$ is the tensor that we want to approximate. In this notation, \refeqn{TAP3} is written as $ \min_{ \tuple{p}\in\Var{S}^{\times r}} f(\tuple{p})$. We propose a RGN method with \emph{trust region} \cite[section 7]{AMS2008} to solve this non-linear least squares problem. The key innovation is the addition of a scheme we call \emph{hot restarts} for effectively dealing with ill-conditioned Hessian approximations; see \refsec{sec_hot_restarts}.

Recall that in RGN methods with trust region the trust region subproblem (TRS) consists of (approximately) solving
\begin{align}\label{eqn_tr_model}
\min_{ \vect{p} \in B_{\Delta} } \, m_{\tuple{p}}(\vect{p}) \;\text{ with }\;  m_{\tuple{p}}(\vect{p}) := f(\tuple{p}) + \langle \deriv{\tuple{p}}{f}, \vect{p} \rangle + \frac{1}{2} \langle \vect{p}, \bigl( (\deriv{\tuple{p}}{\Phi})^* \circ \deriv{\tuple{p}}{\Phi}\bigr)( \vect{p} )\rangle,
\end{align}
where the trust region $B_{\Delta} := \{ \vect{p} \in \Tang{\tuple{p}}{\Var{S}^{\times r}} \;|\; \| \vect{p} \| \le \Delta \}$ with \emph{trust region radius} $\Delta > 0$. The high-level outline of RGN-HR is stated as Algorithm \ref{alg_rlm}. Aside from the addition of step 2, this is a conventional RGN with trust region method; see \cite[chapter 7]{AMS2008}. The reader familiar with RTR methods can skim the next subsections, where we specialize the necessary components to the TAP. The interesting constituents, namely the hot restarts technique, hidden in step 7, and the novel retraction operator in step 8 are described in \cref{sec_hot_restarts,sec_retraction} respectively.

\begin{algorithm2e}[t]\footnotesize
\SetAlgoLined
\KwData{Retraction $R$ on $\Var S^{\times r}$.}
\KwIn{The tensor to approximate $\tensor{B} \in \R^{n_1 \times \cdots \times n_d}$.}
\KwOut{A sequence of iterates $\tuple p^{(k)}\in\Var S^{\times r}$.}
Choose random initial points $p_i \in \Var{S} \subset \R^\Pi$\;
Solve the least-squares problem $[ p_i ]_{i=1}^r \vect{x} = \operatorname{vec}(\tensor{B})$ for $\vect{x}$, and set $p_i \leftarrow x_i p_i$\;
Let $\tuple{p}^{(1)} \leftarrow (p_1,\ldots,p_r)$, and set $k \leftarrow 0$\;
Choose a trust region radius $\Delta > 0$\;
\While{the method has not converged}{
Compute the gradient and GN Hessian approximation\;
Solve the TRS \refeqn{eqn_tr_model} for the search direction $\vect{p}_k \in B_{\Delta} \subset \Tang{\tuple{p}}{\Var{S}^{\times r}}$\;
Compute the tentative next iterate $\tuple{p}^{(k+1)} \leftarrow R_{\tuple{p}^{(k)}}(\vect{p}_k)$\;
Compute the trustworthiness $\rho_k$\;
Accept or reject the next iterate based on the trustworthiness $\rho_k$. If the step was accepted, increment $k$\;
Update the trust region radius $\Delta$.
}
\caption{RGN method with trust region for the TAP} \label{alg_rlm}
\end{algorithm2e}

\subsection{Choice of parameterization}\label{sec_parametrization}
We can choose \emph{any} convenient representation of points on $\Var{S}^{\times r}$.
An efficient data-sparse choice consists of representing a rank-$1$ tensor $p_i = \alpha_i \sten{a}{i}{1} \otimes \cdots \otimes \sten{a}{i}{d}$, where $\sten{a}{i}{k} \in \mathbb{S}^{n_k-1}$, as $(\alpha_i, \sten{a}{i}{1}, \ldots, \sten{a}{i}{d})$. However, representing an element $\vect{v} \in \mathbb{S}^{n} \subset \R^{n+1}$ using $n$ parameters is not convenient from a programming perspective.
For this reason, we prefer \emph{norm-balanced} representatives where $p_i = \sten{a}{i}{1} \otimes \cdots \otimes \sten{a}{i}{d}$ is represented as the tuple $\tuple{p}_i = (\sten{a}{i}{1},\ldots,\sten{a}{i}{d})$ with $\sten{a}{i}{k} \in \R^{n_k}$ and $\|\sten{a}{i}{1}\| = \cdots = \|\sten{a}{i}{d}\|$. This choice incurs a minor cost of $d-1$ parameters relative to minimal parameterizations. The CPD $\tuple{p} \in \Var{S}^{\times r}$ is then represented sparsely using $r(\Sigma+d)$ parameters as $\tuple{p} = (\tuple{p}_1, \ldots, \tuple{p}_r)$.

We can also choose a convenient basis for the tangent space of $\Var{S}^{\times r}$. Suppose that $p_i = \alpha_i \sten{a}{i}{1} \otimes \cdots \otimes \sten{a}{i}{d}$ are the rank-$1$ tensors, where $\sten{a}{i}{k} \in \mathbb{S}^{n_k-1}$. There is a specific orthonormal basis of $\Tang{p_i}{\Var{S}}$ that enables computationally efficient implementations of the basic operations, which are discussed in \refapp{app_implementation}. For $k=1$, let $U_{i,1} := I_{n_1}$ be the standard basis. For $k=2,\dots,d$, we choose an orthonormal basis $U_{i,k}$ of $\Tang{\sten{a}{i}{k}}{\mathbb{S}^{n_k-1}}$ as the first $n_k-1$ columns of the $Q$ factor of a rank-revealing QR-decomposition \((I_{n_k} - \sten{a}{i}{k} (\sten{a}{i}{k})^T)P = QR\) with $Q^T Q = I_{n_k}$, $R$ upper triangular, and $P$ a permutation matrix. Then the columns of
\begin{equation} \label{eqn_def_tangent_segre_repr}
 T_{p_i}
 :=
 \begin{bmatrix} U_{i,1}\otimes\sten{a}{i}{2}\otimes\cdots\otimes\sten{a}{i}{d} & \cdots & \sten{a}{i}{1}\otimes\cdots\otimes\sten{a}{i}{d-1}\otimes U_{i,d} \end{bmatrix}
 = \begin{bmatrix} T_{i,1} & \cdots & T_{i,d} \end{bmatrix}
\end{equation}
form an orthonormal basis of $\Tang{p_i}{\Var{S}}$. This $\Pi \times (\Sigma+1)$ matrix is \emph{never computed explicitly}, but rather we exploit its tensor structure. It suffices to store the matrices $U_{i,k} \in \R^{n_k \times (n_k-1)}$ for $k=2,\ldots,d$ and $i=1,\ldots,r$.

\subsection{Random starting points}
We choose the initial points $p_i \in \Var{S}$ in step {1} randomly by sampling the elements of $\sten{a}{i}{k}$, $i=1,\ldots,r$ and $k=1,\ldots,d$, i.i.d. from a standard normal distribution. Then, $p_i := \sten{a}{i}{1} \otimes \cdots \otimes \sten{a}{i}{d} \in \R^{n_1 \times \cdots \times n_d}$, which is represented sparsely with norm-balanced representatives. After the above random initialization, we solve the linear least-squares problem
\begin{align}\label{eqn_optimal_coefficients}
 \min_{x_1,\ldots,x_r\in \R} \Bigl\| \sum_{i=1}^r x_i p_i - \tensor{B} \Bigr\|_F
\end{align}
to determine the optimal linear combination of the initial $p_i$'s. This least-squares problem admits a unique solution with probability $1$, because the $p_i$'s are generically linearly independent; see, e.g., \cite[Corollary 4.5]{COV2017}. All of the standard methods for solving \refeqn{eqn_optimal_coefficients} can be employed; in \refapp{app_implementation} we describe how this operation is implemented efficiently without computing the $p_i$'s explicitly. Letting $x_1^*, \ldots, x_r^*$ denote the optimal coefficients, we take $x_i^* p_i \in \Var{S}$ rather than $p_i$ as ``optimally scaled'' rank-$1$ tensors in the initial rank-$r$ approximation of $\tensor{B}$.

The main motivation for this extra step is that it can be implemented more efficiently than one solve of the TRS. We observed that this modification generally reduces the number of iterations by a small amount. For the same reason, we also employed this modification in the hot restarts strategy discussed in \refsec{sec_hot_restarts}.

\subsection{The trust region scheme}
The trust region radius $\Delta$ is updated according to the standard scheme \cite[chapter 4]{NW2006}.
Define minimum and maximum radii as
\[
 \Delta_{\min}(\tuple{p}) := 10^{-1} \cdot \sqrt{\frac{d}{r} \cdot \sum_{i=1}^r \|\sten{a}{i}{1}\|_F^2}
 \;\;\text{ and }\;\;
 \Delta_{\max} := \frac{1}{2} \| \tensor{B} \|_F,
\]
respectively. As initial radius in step {4} we take
\(
 \Delta = \min\{ \Delta_{\min}(\tuple{p}^{(1)}), \Delta_{\max} \}.
\)

In step {9}, the trustworthiness of the model at $\tuple{p}^{(k)} \in \Var{S}^{\times r}$ is evaluated as
\[
 \rho_{k} = \frac{f(\tuple{p}^{(k)}) - f(\tuple{p}^{(k+1)})}{m_{\tuple{p}^{(k)}}(0) - m_{\tuple{p}^{(k)}}(\vect{p}_k)}.
\]
where $\vect{p}_k$ is the solution of the TRS in step {7}, and $\tuple{p}^{(k+1)}$ is computed in step {8}.
If the trustworthiness exceeds $0.2$, then $\tuple{p}^{(k+1)}$ is accepted as the next iterate in step {10}; otherwise, the iterate is rejected.

In step 11, the trust region radius is updated as follows. If the trustworthiness $\rho_k > 0.6$, then the trust region is enlarged by taking
\(
 \Delta = \min\{ 2 \|\vect{p}_k\|, \Delta_{\max} \},
\)
where $\vect{p}_k$ is the solution of the TRS in step {7}. Otherwise, the radius is adjusted as
\[
 \Delta = \min \biggl\{ \Bigl( \frac{1}{3} + \frac{2}{3} \cdot \bigl(1 + e^{-14\cdot(\rho_k - \frac{1}{3})} \bigr)^{-1} \Bigr) \Delta,\; \Delta_{\max} \biggr\},
\]
which is the strategy in Tensorlab's \texttt{cpd\_nls} method \cite{Tensorlab} with different constants. The effect of the scaling factor $\tfrac{1}{3} + \tfrac{2}{3} (1 + e^{-14(\rho_k-\frac{1}{3})})^{-1}$ is shown below:
\begin{center}
\begin{tikzpicture}
	\begin{axis}[
		xlabel=$\rho_k$,
		ylabel={},
		xmin=-.6, xmax=1, ymin=0.33, ymax=1,
		width=.8\textwidth, height=10em,
		grid=major
	]
	\addplot[color=black,samples=300] {1/3 + (2/3) / (1 + 2.718^(-14*(x-1/3)))};
	\end{axis}
\end{tikzpicture}
\end{center}
When $\rho_k < 0.2$, the trust region radius is strongly reduced by multiplying with approximately $\tfrac{1}{3}$, between $0.2 \le \rho_k \le 0.47$ the reduction factor increases approximately linearly with $\rho_k$, and finally for $0.47 \le \rho_k \le 0.6$ the radius is kept nearly constant.

\subsection{Gradient and Hessian approximation in coordinates} \label{sec_grad_hess_local_coords}
For solving the TRS in step 7, we need explicit expressions for the gradient and approximate Riemannian Hessian of the objective function. The derivative of $f$ at $\tuple{p} \in \Var{S}^{\times r}$ is
\[
 \deriv{\tuple{p}}{f} = \frac{1}{2} \deriv{\tuple{p}}{\langle F(\tuple{p}), F(\tuple{p})\rangle} = \langle \deriv{\tuple{p}}{F}, F(\tuple{p}) \rangle = \langle \deriv{\tuple{p}}{\Phi}, F(\tuple{p}) \rangle.
\]
Let $\tuple{p} = (p_1,\ldots,p_r)$ and $T_{p_i}$ be defined as in \refeqn{eqn_def_tangent_segre_repr}. With respect to the standard basis on $\R^\Pi$ and the orthonormal basis given by the columns of
\begin{align}\label{eqn_orth_basis}
 B := \operatorname{diag}(T_{p_1}, \ldots, T_{p_r})
\end{align}
on the domain, the derivative $\deriv{\tuple{p}}{f}$ is represented by the \emph{gradient} vector
\[
  \nabla_{\tuple{p}} f := T_{\tuple{p}}^T  F(\tuple{p}) = T_{\tuple{p}}^T \cdot \operatorname{vec}\bigl( \Phi(\tuple{p}) - \tensor{B} \bigr),
\]
where $\tensor{B} \in \R^{n_1 \times \cdots \times n_d}$ and {$T_{\tuple{p}}$ is the ``Jacobian'' matrix of $\deriv{\tuple{p}}{\Phi}$, i.e.,}
\begin{align} \label{eqn_jacobian}
 T_{\tuple{p}} := \begin{bmatrix} T_{p_1} & \cdots & T_{p_r} \end{bmatrix}
\end{align}
with $T_{p_i}$ as in \refeqn{eqn_def_tangent_segre_repr}. \refapp{sec_fast_gradient} gives an efficient algorithm for computing $\nabla_{\tuple{p}} f$.

We approximate the Riemannian Hessian by the GN approximation, namely $(\deriv{\tuple{p}}{F})^* (\deriv{\tuple{p}}{F})$; see, e.g., \cite[section 8.4]{AMS2008}. It follows from the above computations that its matrix with respect to the basis $B$ on both the domain and image is
\(
T_{\tuple{p}}^T T_{\tuple{p}}.
\)
Efficiently constructing it is covered in \refapp{sec_fast_hessian}.

Let $\vect{t} = B \vect{x} \in \Tang{\tuple{p}}{\Var{S}^{\times r}}$, {and define
\[
 \vect{r}_{\tuple{p}} := \operatorname{vec}\bigl( \Phi(\tuple{p}) - \tensor{B} \bigr), \quad
 \vect{g}_{\tuple{p}} := T_{\tuple{p}}^T \vect{r}_{\tuple{p}}, \text{ and} \quad
 H_{\tuple{p}} := T_{\tuple{p}}^T T_{\tuple{p}}.
\]}
Then, we find in coordinates with respect to the orthonormal basis $B$ that
\begin{align*}
\langle \deriv{\tuple{p}}{f}, \vect{t} \rangle &= \langle B T_{\tuple{p}}^T \vect{r}_{\tuple{p}}, B \vect{x} \rangle = \vect{g}_{\tuple{p}}^T \vect{x}, \text{ and } \\
\langle \vect{t}, \bigl( (\deriv{\tuple{p}}{\Phi})^* \circ \deriv{\tuple{p}}{\Phi}\bigr)( \vect{t} )\rangle &=
\langle B \vect{x}, B T_{\tuple{p}}^T T_{\tuple{p}} B^T B \vect{x} \rangle = \vect{x}^T H_{\tuple{p}} \vect{x}.
\end{align*}

\subsection{Solving the TRS} \label{sec_tr_subproblem}
The key step in RTR methods is solving the TRS in \refeqn{eqn_tr_model} for obtaining a suitable tangent direction along which the retraction can proceed. Many standard strategies exist for computing or approximating the solution of the TRS \cite{NW2006}. We choose the cheap dogleg heuristic \cite[section 4.1]{NW2006}.

Note that $H_{\tuple{p}}$ is symmetric positive semi-definite. Then the \emph{Newton direction} $\vect{p}_{\text{N}}$ and the \emph{Cauchy point} are given respectively by
\begin{align} \label{eqn_GN_and_CP}
 H_{\tuple{p}} \vect{p}_{\text{N}} := -\vect{g}_{\tuple{p}}
 \;\text{ and }\;
\vect{p}_{\text{C}} = - \frac{\vect{g}_{\tuple{p}}^T H_{\tuple{p}} \vect{g}_{\tuple{p}}}{\vect{g}_{\tuple{p}}^T \vect{g}_{\tuple{p}}} \vect{g}_{\tuple{p}}.
 \end{align}
If $\vect{p}_{\text{N}}$ is outside of the trust region $B_{\Delta}$, but $\vect{p}_{\text{C}} \in B_{\Delta}$, then the dogleg step is the intersection with the boundary of the ball $B_{\Delta}$ of vector pointing from $\vect{p}_{\text{C}}$ to $\vect{p}_{\text{N}}$:
\[
 \vect{p}_{\text{I}} := \vect{p}_{\text{C}} + (\tau-1)(\vect{p}_{\text{N}} - \vect{p}_{\text{C}}),
\]
where $1 \le \tau \le 2$ is the unique solution such that $\| \vect{p}_{\text{I}} \|^2 = \Delta^2$.
Then, the dogleg step approximates the optimal TRS solution of \refeqn{eqn_tr_model} by the next rule:
\begin{align} \label{eqn_dogleg_step}
\widehat{\vect{p}} =
 \begin{cases}
  \vect{p}_{\text{N}} & \text{if } \| \vect{p}_{\text{N}} \| \le \Delta, \\
  \vect{p}_{\text{C}} & \text{if } \| \vect{p}_{\text{N}} \| > \Delta \text{ and } \| \vect{p}_{\text{C}} \| \ge \Delta, \\
  \vect{p}_{\text{I}} & \text{otherwise}.
 \end{cases}
\end{align}

As mentioned in \refrem{rem_numerical_accuracy}, computing $\vect{p}_{\text{N}}$ via the normal equations is not numerically stable; however, a stable least-squares solve via a $QR$-decomposition is computationally not attractive. We choose solving the normal equations as this requires much fewer operations due to $H_{\tuple{p}}$'s structure; see \refapp{app_implementation}.

\subsection{Stopping criterion} \label{sec_stopping_criterion}
The RGN-HR method is halted based on a multi-com\-po\-nent stopping criterion involving $5$ parameters; the first $4$ are standard: an absolute tolerance $\tau_f$ on the objective function value, a relative tolerance $\tau_{\Delta f}$ on the improvement of the objective function value, a relative tolerance on the step size $\tau_{\Delta x}$, and a maximum number of iterations $k_{\max}$. The last parameter $r_{\max}$ specifies the maximum number of restarts (see \refsec{sec_hot_restarts}).

Specifically, the method halts in the $k$th iteration if either
\[
 f(\tuple{p}^{(k)}) \le \tau_f,
 \quad
 \frac{|f(\tuple{p}^{(k-1)})-f(\tuple{p}^{(k)})|}{f(\tuple{p}^{(1)})} \le \tau_{\Delta f},
 \text{ or }\quad
 \frac{\|\vect{p}_k\|}{ \bigl( \sum_{k=1}^d \|A_k\|^2_F \bigr)^{\frac{1}{2}}} \le \tau_{\Delta x},
\]
where $A_k = [\sten{a}{i}{k}]_{i=1}^r$ are the factor matrices corresponding to the norm-balanced representatives.
If neither of these conditions is satisfied after $k_{\max}$ iterations or after $r_{\max}$ restarts, then the method halts.

\section{The hot restarts strategy}\label{sec_hot_restarts}
The main motivation for adding the trust region scheme to (quasi-)\-Newton methods consists of obtaining a globally convergent method that still has a local superlinear rate of convergence.
In principle, the trust region mechanism can handle ill-conditioned Hessian approximations $H_{\tuple{p}} = T_{\tuple{p}}^T T_{\tuple{p}}$, where $T_{\tuple{p}}$ is as in \refeqn{eqn_jacobian}, without special considerations. However, we observed in practice that the RGN with trust region method progressed very slowly near ill-conditionened $H_{\tuple{p}}$'s.

In \cref{sec_ill_H_p}, we investigate the geometry of decompositions $\tuple{p}$ inducing singular Hessian approximations $H_{\tuple{p}}$. The main conclusion is that \emph{these decompositions should be avoided altogether.} We believe that one should generally escape ill-conditioned Hessian approximations as quickly as possible. Several techniques for this were considered before in the context of the CPD, notably by extrapolating previous search directions \cite{AB2000,RCH2008,DeSterck2012,CHQ2011}, using stochastic gradient-descent algorithms, using the true Hessian, or exploiting third-order information \cite{NP2006,AG2016}. Notwithstanding extensive experimentation,\footnote{We performed preliminary experiments with regularization, using the true Hessian, switching to steepest descent or conjugate gradients near ill-conditioned decompositions, and a simple randomization procedure wherein a multiple of a random vector is added to the dogleg direction in which the weight increases as the decomposition becomes more ill-conditioned.} we found that the following Monte Carlo approach typically resulted in the best results: randomly sample nearby points of $\tuple{p}$ on $\Var{S}^{\times r}$ until one is found with a reasonably well-conditioned Hessian approximation. The specific details of {this \emph{hot restarts} scheme are stated} in \refalg{alg_hot_restarts}, which is executed before computing the dogleg step in \refeqn{eqn_dogleg_step}.

\begin{algorithm2e}[tb]\footnotesize
\caption{Hot restarts} \label{alg_hot_restarts}
 Let $(\sten{a}{i}{1}, \ldots, \sten{a}{i}{d})$ be a norm-balanced representative of $p_i$\;
 $t \leftarrow 1$\;
 $\widehat{\alpha} \leftarrow \min \bigl\{ \frac{1}{4}, 10 \cdot \frac{\|\tensor{B}-\Phi(\tuple{p})\|_F}{\|\tensor{B}\|_F} \bigr\}$\;
 Compute the Cholesky decomposition $H_{\tuple{p}} = L_{\tuple{p}} L_{\tuple{p}}^T$\;
 \While{Cholesky decomposition failed \textbf{\emph{or}} $\min_{i} (L_{\tuple{p}})_{ii} < 10^{-5}$}{
    $\alpha \leftarrow t \cdot \widehat{\alpha}$\;
    \For{$i = 1, \ldots, r$}{
       Let the elements of $\vect{n}_k \in \R^{n_k}$ be sampled i.i.d. from $N(0,1)$\;
       $p_i' \leftarrow \bigl((1-\alpha)\sten{a}{i}{1} + \alpha \frac{\|\sten{a}{i}{1}\|}{\|\vect{n}_1\|} \vect{n}_1 \bigr) \otimes \cdots \otimes \bigl((1-\alpha)\sten{a}{i}{d} + \alpha \frac{\|\sten{a}{i}{d}\|}{\|\vect{n}_d\|} \vect{n}_d \bigr)$\;
    }
    Solve $\min_{\vect{x}} \| \tensor{B} - \sum_{i=1}^r x_i p_i'\|_F$\;
    $\tuple{p} \leftarrow (x_1 p_1', x_2 p_2', \ldots, x_r p_r')$\;
    $\Delta \leftarrow \min\{ \Delta_{\min}(\tuple{p}), \Delta_{\max} \}$\;
    $t \leftarrow t + 1$\;
    Compute the Cholesky decomposition $H_{\tuple{p}} = L_{\tuple{p}} L_{\tuple{p}}^T$\;
 }
\end{algorithm2e}

\begin{remark}[Global convergence]
Proving global convergence for any input $\tensor{B} \in \R^{n_1 \times \cdots \times n_d}$ is impossible.
Assume that a hot restart is triggered whenever the CPD $\tuple{p}^{(k)} \in \Var{S}^{\times r}$ in the execution of \refalg{alg_rlm} has $\kappa(\tuple{p}^{(k)}) \ge \tau > 1$. Let $\Var{G}_{\le \tau} := \{ \tuple{p}\in\Var{S}^{\times r} \;|\; \kappa(\tuple{p}) \le \tau \}$ be the locus of decompositions whose condition number is below the jumping threshold. Since \refalg{alg_rlm} produces a sequence of points $\tuple{p}^{(k)} \in \Var{G}_{\le \tau}$, it can at most converge to local minimizers of \refeqn{TAP3} that are in $\Var{G}_{\le \tau}$. Hence, we can at most prove global convergence for inputs $\tensor{B} \in \R^{n_1 \times \cdots \times n_d}$ whose best rank-$r$ approximation $\tensor{B}^*$ exists and is such that $\Phi^{-1}(\tensor{B}^*)$ has at least one CPD contained in $\Var{G}_{\le \tau}$. A major technical obstacle then still remains because the hot restarts procedure will introduce a stochastic aspect, further complicating any proof of global convergence. A full investigation is outside of the scope of this paper.
\end{remark}

\begin{remark}[Local convergence]
Local convergence is proved as follows. Let $\tau$ be as above. Taking $\frac{\kappa}{\tau} < \alpha < 1$ in Theorem 1 of \cite{BV2017bis} establishes conditions for linear or quadratic local convergence, including attraction radii and multiplicative constants. If $\frac{\kappa}{\tau} \ge 1$, then there is no proof of convergence, as expected.
\end{remark}

\subsection{Recognizing ill-conditioned decompositions}
For determining if a decomposition $\tuple{p}$ is ill-conditioned, one could compute or approximate the smallest singular value of $H_{\tuple{p}}$.
For computational efficiency, however, we prefer relying on properties of the Cholesky decomposition $H_{\tuple{p}} = L_{\tuple{p}} L_{\tuple{p}}^T$, where $L_{\tuple{p}}$ is lower triangular. Recall that $H_{\tuple{p}}=T_{\tuple{p}}^T T_{\tuple{p}}$ and that $T_{\tuple{p}}$ is the matrix of $\deriv{\tuple{p}}{\Phi}$.
By Lemma \ref{norm_of_phi}, $\Vert \deriv{\tuple{p}}{\Phi}\Vert_2\geq 1$ and so $\varsigma_{\max}(H_{\tuple{p}}) \ge 1$. Combining this with the fact that $\varsigma_k(H_{\tuple{p}}) = \bigl( \varsigma_k(L_{\tuple{p}}) \bigr)^2$ and that one has $\varsigma_{\min}(A) \le |\lambda_{\min}(A)|$, where $\lambda_{\min}$ is the smallest eigenvalue in magnitude, we find
$$
\kappa_2( H_{\tuple{p}} ) = \frac{\varsigma_{\max}( H_{\tuple{p}} )}{\varsigma_{\min}( H_{\tuple{p}} )} \ge \bigl( \varsigma_{\min}(L_{\tuple{p}}) \bigr)^{-2}
\ge \max_i |(L_{\tuple{p}})_{i,i}|^{-2}.
$$
In the last step we used that the diagonal entries of a triangular matrix are its eigenvalues.
Moreover, a breakdown will occur while computing the Cholesky decomposition if $H_{\tuple{p}}$ is very ill-conditioned \cite[p. 200]{Higham1996}.
For these reasons, we apply a hot restart if a breakdown occurred or if one of the diagonal elements of $L_{\tuple{p}}$ is less than $10^{-5}$, so $\kappa_2(H_{\tuple{p}}) \ge 10^{10}$; see line {5} of \refalg{alg_hot_restarts}.
After executing \refalg{alg_hot_restarts}, the Cholesky decomposition is reused for efficiently solving $L_{\tuple{p}} L_{\tuple{p}}^{T} \vect{p}_{\mathrm{N}} = - \vect{g}_{\tuple{p}}$ in the dogleg step via backsubstitutions.

The constants appearing in line $3$ of \refalg{alg_hot_restarts} were empirically chosen.
We choose $\widehat{\alpha}$ in function of the relative residual, for the following reason. If the relative residual $\tau = \tfrac{\|\tensor{B}-\Phi(\tuple{p})\|_F}{\|\tensor{B}\|_F}$ is small, one hopes to be close to a global optimizer, so spending computational resources to search an acceptable decomposition in a small radius about $\tuple{p}$ is justified. However, if ill-conditioning is already encountered when $\tau \gtrsim 10^{-1}$, then our experiments suggested that aggressively restarting, {i.e., Monte Carlo sampling with a large $\widehat{\alpha}$,} was the most cost-effective strategy.
As long as a satisfyingly well-conditioned decomposition is not sampled, $\alpha$ is gradually increased.

\subsection{Analysis}\label{sec_ill_H_p}
In this subsection, we argue why CPDs with a singular Hessian approximation $H_{\tuple{p}}$ are troublesome for the RGN method. The arguments below are not intended to be a proof, but rather they provide an intuition and offer an outline of a potential proof strategy. A complete proof is beyond the scope of this work.

From the properties of Gram matrices, we have the following connection between $\kappa(\tuple{p})$ from \refeqn{kappa} and $H_{\tuple{p}}$:
\begin{align} \label{eqn_condition_number_hessian}
\kappa(\tuple{p}) =
\begin{cases}
 \infty & \text{if } H_{\tuple{p}} \text{ is singular}, \\
 \| T_{\tuple{p}}^\dagger \|_2 = \| H_{\tuple{p}}^{-1} \|_2^{\frac{1}{2}} & \text{otherwise}.
\end{cases}
\end{align}
Assume that some computed local optimizer $\tuple{p}^*$ of \refeqn{TAP3} is well-conditioned, and hence, by \refeqn{eqn_condition_number_hessian}, the associated $H_{\tuple{p}^*}$ is reasonably well-conditioned. However,
during the execution of the RTR method, we may encounter points $\tuple{p}$, for which $H_{\tuple{p}}$ is almost singular. We sketch an informal argument why escaping the neighborhood of such points may take many iterations in a RGN method, with or without trust region.

Following \cite[equation (2.3)]{BV2017}, \emph{the locus of ill-posed CPDs} is defined as follows:
\[
\Var{I}_r := \{ \tuple{q} \in \Var{S}^{\times r} \;|\; \kappa(\tuple{q}) = \infty \} = \{ \tuple{q} \in \Var{S}^{\times r} \;|\; H_{\tuple{q}}=T_{\tuple{q}}^TT_{\tuple{q}} \text{ is singular} \};
\]
the second equality is by \refeqn{eqn_condition_number_hessian}. The following lemma, proved in \refapp{app_proof_sthosvd_retraction}, implies that a randomly chosen CPD $\tuple{p}$ lies on $\Var I_r$ with probability zero, but also that
there is a nonzero probability of coming close to $\Var I_r$; we do not know how large it is.

\begin{lemma}\label{prop_I_r}
If $\Sec{r}{\Var{S}}$ is non-defective, then $\Var I_r$ is contained in a non-empty, strict  subvariety of $\Var S^{\times r}$.
\end{lemma}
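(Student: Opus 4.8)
The plan is to realize $\Var I_r$ as a determinantal (rank-drop) locus cut out by polynomials, and then to use the non-defectivity hypothesis to guarantee that these defining equations do not all vanish identically.

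First I would replace the orthonormal-basis matrix $T_{\tuple p}$ of $\deriv{\tuple p}{\Phi}$ --- whose entries involve the $Q$-factors from \refeqn{eqn_def_tangent_segre_repr} and are therefore \emph{not} algebraic functions of $\tuple p$ --- by an over-parametrized Terracini matrix $M(\tuple p)$ obtained by precomposing $\Phi$ with the Segre map. Writing $\tensor{B}_i = \sten{a}{i}{1} \otimes \cdots \otimes \sten{a}{i}{d}$, the columns of the $i$th block of $M(\tuple p)$ are the partial derivatives $\vect{u} \otimes \sten{a}{i}{2} \otimes \cdots \otimes \sten{a}{i}{d}, \ldots, \sten{a}{i}{1} \otimes \cdots \otimes \sten{a}{i}{d-1} \otimes \vect{v}$ as $\vect{u},\vect{v}$ range over standard basis vectors; its entries are monomials in the coordinates of the factor vectors, hence polynomials. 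Since the Segre map is a submersion onto $\Var S$, the column span of $M(\tuple p)$ equals $\sum_{i=1}^r \Tang{\tensor{B}_i}{\Var S}$, which is exactly the image of $\deriv{\tuple p}{\Phi}$. Consequently $\rank M(\tuple p) = \rank \deriv{\tuple p}{\Phi}$, and this rank-drop locus is manifestly independent of the bases chosen in the body of the paper.

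Second, I would identify $\Var I_r$ with $\{ \tuple p \in \Var S^{\times r} \mid \rank M(\tuple p) < n \}$, where $n = r(\Sigma+1) = \dim \Var S^{\times r}$; this is precisely the common zero set of all $n \times n$ minors of $M$, which exist because $n < \Pi$ in the strictly subgeneric regime. As these minors are polynomials in the factor vectors, $\Var I_r$ is Zariski-closed in $\Var S^{\times r}$, i.e., a subvariety. Third, I would invoke non-defectivity: by hypothesis $\dim \Sec{r}{\Var S} = \dim \Var S^{\times r} = n$, so the generic rank of $\deriv{\tuple p}{\Phi}$ equals $n$ on a dense set. Hence at least one $n \times n$ minor $\mu$ of $M$ is not identically zero on $\Var S^{\times r}$, so the common zero locus of all minors is a \emph{proper} subset: $\Var I_r \subsetneq \Var S^{\times r}$. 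Taking $V := \{ \tuple p \mid \mu(\tuple p) = 0 \}$ then produces a strict subvariety containing $\Var I_r$.

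Finally, for the \emph{non-emptiness} and the implicit measure-zero conclusion I would pass to the complexification: the affine cone $\widehat{\Var S}$ over the Segre variety is irreducible, so $\widehat{\Var S}^{\times r}$ is irreducible over $\C$ and $\Var S^{\times r}$ is a dense smooth locus inside it. A proper Zariski-closed subset therefore has strictly smaller dimension, and over $\C$ the vanishing locus of the single non-identically-zero minor $\mu$ is a non-empty hypersurface section by Krull's principal ideal theorem. The main obstacle, and the step requiring most care, is exactly this interface between the real-analytic picture used elsewhere and the algebraic picture: one must (i) justify that the rank condition is genuinely algebraic by trading the non-polynomial orthonormal frames for the polynomial Terracini matrix, and (ii) reconcile the real setting with the complexification so that ``strict subvariety'' indeed forces Lebesgue measure zero, which is what the subsequent probabilistic statements rely upon.
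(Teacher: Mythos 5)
Your proof is correct and follows essentially the same route as the paper's: both arguments reduce the claim to the fact that the rank-drop locus of $\deriv{\tuple p}{\Phi}$ is Zariski-closed and, by non-defectivity (Terracini's lemma / a dimension count), misses a Zariski-dense set --- the paper packages this as ``$\Var I_r$ lies in the critical locus of the complexified addition map, a proper subvariety'' by citing Sard's theorem, whereas you exhibit the defining equations explicitly as the maximal $n\times n$ minors of the polynomial Terracini matrix. Your explicit replacement of the non-algebraic orthonormal frames $T_{\tuple p}$ by the monomial Terracini matrix is a worthwhile clarification that the paper's one-line proof leaves implicit.
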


Let $\tuple{p}, \tuple{q} \in \Var{S}^{\times r}$ with $\tuple{q}$ close to $\tuple{p}$, and assume that the Hessian approximation $H_{\tuple{q}}$ is singular, i.e., $\tuple{q} \in \Var{I}_r$.
Let $T_{\tuple{p}} = U S V^T$ be the compact SVD of $T_{\tuple{p}}$, where $V$ is orthogonal, $U$ has orthonormal columns, and $S = \operatorname{diag}(\varsigma_1, \varsigma_2, \ldots, \varsigma_{r(\Sigma+1)})$ with $\varsigma_1 \ge \varsigma_2 \ge \ldots \ge \varsigma_{r(\Sigma+1)} \ge 0$. Let $n := r(\Sigma+1)$ and write
\(
V=\begin{bmatrix}\vect{v}_1 & \cdots & \vect{v}_n \end{bmatrix}
\). Assuming $\varsigma_{n} > 0$, or equivalently $\kappa(\tuple{p}) < \infty$, by \refeqn{eqn_GN_and_CP}, the Newton direction is
\begin{equation}\label{alpha_i}
\vect{p}_{\mathrm{N}} = -H_{\tuple{p}}^\dagger \vect{g}_{\tuple{p}} = - V S^{-1} U^T \vect{r}_{\tuple{p}} = -\sum_{i=1}^{n} \alpha_i \varsigma_i^{-1} \vect{v}_i,
\end{equation}
where $\alpha_i := (U^T\vect{r}_{\tuple{p}})_i$. Since $\tuple{p}$ and $\tuple{q}$ are close,
$
\varsigma_n^2 = \|H_{\tuple{p}}^{-1}\|_2^{-1} \approx \|H_{\tuple{q}}^{-1}\|_2^{-1} = 0,
$
and so $\vect{p}_{\mathrm{N}}$ is pointing mostly in the direction of $\vect{v}_{n}$. As long as $\vect{v}_n$ is a descent direction of the objective $\tfrac{1}{2}\|F(\tuple{p})\|^2$, the dogleg step is expected to yield an escape from~$\tuple{q}$.

Unfortunately, there are two classes of tensors, whose decompositions could be particularly troublesome when coming close to them during the RGN method: tensors with infinitely many decompositions and tensors whose \emph{border ranks} \cite[section 2.4]{Landsberg2012} are not equal to their ranks.\footnote{It is an interesting open question whether these decompositions could actually be attractive for the RGN method without trust region.}

\subsubsection{Tensors with infinitely many decompositions}
Consider a decomposition $\tuple{q}$ that is not isolated. To show this is troublesome, we need the next result.

\begin{proposition}\label{prop_curve_is_illcond}
Let $\tuple{q}=(q_1,\ldots,q_r) \in \Var{S}^{\times r}$ and $\tensor{B} = \Phi(\tuple{q})$. Let $\Phi^{-1}(\tensor{B})$ contain a positive-dimensional analytic submanifold $\Var{E}$ without boundary with $\tuple{q}\in \Var{E}$.
\begin{enumerate}
\item We have $\Var{E} \subset \Var{I}_r$.
\item If
$\tuple{z} = (z_1, z_2, \ldots, z_r) \in \Tang{\tuple{q}}{\Var{E}} \subset \Tang{\tuple{q}}{\Var{S}^{\times r}}$
and $\vect{z}_i$ are the local coordinates of $z_i$ with respect to the basis of $\Tang{q_i}{\Var{S}}$ given by the columns of the matrix $T_{q_i}$ from~\refeqn{eqn_def_tangent_segre_repr}, then
\(
\vect{z}^T := \begin{bmatrix}
\vect{z}_1^T & \vect{z}_2^T & \cdots & \vect{z}_r^T
\end{bmatrix} \in \ker T_{\tuple{q}}.
\)
\end{enumerate}
\end{proposition}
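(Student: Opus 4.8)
The plan is to prove the two parts in sequence, obtaining part (1) as a consequence of part (2). For part (2), I would start from the defining property that $\Var{E} \subset \Phi^{-1}(\tensor{B})$, so $\Phi$ is constant on $\Var{E}$. The key observation is that a tangent vector $\tuple{z} \in \Tang{\tuple{q}}{\Var{E}}$ is, by definition, the velocity $\gamma'(0)$ of some smooth curve $\gamma:(-\epsilon,\epsilon) \to \Var{E}$ with $\gamma(0) = \tuple{q}$. Since $\Phi(\gamma(t)) = \tensor{B}$ is constant in $t$, differentiating at $t=0$ and applying the chain rule gives $\deriv{\tuple{q}}{\Phi}(\tuple{z}) = 0$; that is, $\tuple{z} \in \ker \deriv{\tuple{q}}{\Phi}$. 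It then remains to translate this intrinsic statement into the coordinate statement about $T_{\tuple{q}}$: since the columns of $T_{q_i}$ form an orthonormal basis of $\Tang{q_i}{\Var{S}}$ (by the construction in~\refeqn{eqn_def_tangent_segre_repr}), and $B = \operatorname{diag}(T_{q_1},\ldots,T_{q_r})$ is the corresponding orthonormal basis of the product tangent space, the vector $\vect{z}$ of local coordinates satisfies $\tuple{z} = B\vect{z}$. The matrix $T_{\tuple{q}}$ from~\refeqn{eqn_jacobian} represents $\deriv{\tuple{q}}{\Phi}$ with respect to these bases, so $\deriv{\tuple{q}}{\Phi}(\tuple{z}) = 0$ is exactly $T_{\tuple{q}}\vect{z} = 0$, i.e.\ $\vect{z} \in \ker T_{\tuple{q}}$.

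For part (1), I would argue that $\Var{E} \subset \Var{I}_r$ by showing $H_{\tuple{q}}$ is singular at every $\tuple{q} \in \Var{E}$. Since $\Var{E}$ is positive-dimensional and has no boundary, the tangent space $\Tang{\tuple{q}}{\Var{E}}$ is nonzero at each of its points, so there exists a nonzero $\tuple{z} \in \Tang{\tuple{q}}{\Var{E}}$. By part (2), its coordinate vector $\vect{z}$ is a nonzero element of $\ker T_{\tuple{q}}$ (nonzero because $B$ is an isometry, so $\tuple{z} \ne 0$ forces $\vect{z} \ne 0$). Hence $T_{\tuple{q}}$ has a nontrivial kernel, so $H_{\tuple{q}} = T_{\tuple{q}}^T T_{\tuple{q}}$ is singular, which by the characterization of $\Var{I}_r$ via~\refeqn{eqn_condition_number_hessian} means $\tuple{q} \in \Var{I}_r$. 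Running this argument for every $\tuple{q} \in \Var{E}$ yields $\Var{E} \subset \Var{I}_r$.

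The main obstacle, though largely a matter of care rather than depth, is making precise the identification of the several tangent spaces involved: $\Tang{\tuple{q}}{\Var{E}}$ is a subspace of $\Tang{\tuple{q}}{\Var{S}^{\times r}} = \Tang{q_1}{\Var{S}} \times \cdots \times \Tang{q_r}{\Var{S}}$, and I must ensure the curve $\gamma$ genuinely lies in the \emph{analytic submanifold} $\Var{E}$ so that its velocity is an element of $\Tang{\tuple{q}}{\Var{E}}$ in the intrinsic sense, while simultaneously viewing $\gamma$ as a curve in the ambient $\Var{S}^{\times r}$ so that the chain rule for $\deriv{\tuple{q}}{\Phi}$ applies. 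The hypothesis that $\Var{E}$ is an analytic submanifold without boundary guarantees both the existence of such smooth curves realizing every tangent direction and the nonvanishing of the tangent space in positive dimension; beyond this bookkeeping, the computation is routine once the coordinate representation $\tuple{z} = B\vect{z}$ and the identity $J = T_{\tuple{q}}$ are in place.
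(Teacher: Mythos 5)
Your proposal is correct, and your part (2) is essentially the paper's argument: differentiating $\Phi$ along $\Var{E}$ gives $\deriv{\tuple{q}}{\Phi}(\tuple{z}) = z_1 + \cdots + z_r = 0$, which in the orthonormal bases of \refeqn{eqn_def_tangent_segre_repr} and \refeqn{eqn_jacobian} reads $T_{\tuple{q}}\vect{z} = T_{q_1}\vect{z}_1 + \cdots + T_{q_r}\vect{z}_r = 0$. Where you genuinely diverge is part (1). The paper proves it independently of part (2) by invoking an external result, Lemma~6.5 of \cite{COV2017}, which gives that for every $\tuple{p}=(p_1,\ldots,p_r)\in\Var{E}$ the span $\langle \Tang{p_1}{\Var{S}},\ldots,\Tang{p_r}{\Var{S}}\rangle$ has dimension strictly less than $r\cdot\dim\Var{S}$; since this span is the image of $\deriv{\tuple{p}}{\Phi}$, the columns of $T_{\tuple{p}}$ are linearly dependent and $\tuple{p}\in\Var{I}_r$. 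You instead derive part (1) from part (2): because the hypotheses hold uniformly over $\Var{E}$, the curve argument applies at every $\tuple{p}\in\Var{E}$, and the positive-dimensional tangent space $\Tang{\tuple{p}}{\Var{E}}$ supplies a nonzero element of $\ker T_{\tuple{p}}$, so $H_{\tuple{p}} = T_{\tuple{p}}^T T_{\tuple{p}}$ is singular and $\tuple{p}\in\Var{I}_r$ by \refeqn{eqn_condition_number_hessian}. Your route is self-contained and slightly more elementary --- it re-proves, by hand, exactly the special case of the cited lemma that is needed --- whereas the paper's route outsources that step and keeps the two parts logically independent; both are valid. One cosmetic point: your closing phrase ``the identity $J = T_{\tuple{q}}$'' clashes with the paper's notation, in which $J$ denotes the Jacobian of the factor-matrix parametrization and $T_{\tuple{p}} = JD$; what you mean, correctly, is that $T_{\tuple{q}}$ is the matrix of $\deriv{\tuple{q}}{\Phi}$ with respect to the chosen orthonormal bases.
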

\begin{proof}
By construction, $\Var{E}$ is a smooth algebraic variety. From \cite[Lemma 6.5]{COV2017} it follows that for every $\tuple{p}=(p_1,\ldots,p_r) \in \Var{E}$ the corresponding $\langle \Tang{p_1}{\Var{S}}, \ldots, \Tang{p_r}{\Var{S}} \rangle$ is of dimension strictly less than $r \cdot \dim \Var{S} \le \Pi$. This entails that $T_{\tuple{p}}$ has linearly dependent columns, proving the first part. Furthermore, $0=(\deriv{\tuple{q}}{\Phi})( \tuple{z} )=z_1 + z_2 + \dots + z_r$, which written in coordinates is
$
 T_{\tuple{q}} \vect{z} = T_{q_1} \vect{z}_1 + T_{q_2} \vect{z}_2 + \cdots + T_{q_r} \vect{z}_r = 0,
$
by \refeqn{eqn_def_tangent_segre_repr} and \refeqn{eqn_jacobian}.
\end{proof}

Consequently, the kernel of $H_{\tuple{q}}=T_{\tuple{q}}^TT_{\tuple{q}}$ contains at least the tangent space $\Tang{\tuple{q}}{\Var E}$. The implication for nearby points $\tuple{p} \approx \tuple{q}$ is as follows. Let $V=[\begin{smallmatrix} \vect{v}_1 & \cdots & \vect{v}_n\end{smallmatrix}]$ be the matrix of right singular vectors of $T_{\tuple p}$, and $e := \dim \Var{E}$. Then, because $\tuple{p}\approx \tuple{q}$ and since the kernel of $T_{\tuple q}$ is at least $e$-dimensional, we expect the linear space $\langle \vect{v}_{n-e+1},\ldots, \vect{v}_n \rangle$ to be almost parallel to $\Tang{\tuple{q}}{\Var E}$. Moreover, at least $e$ singular values of $T_{\tuple p}$ will be small, so that, by \refeqn{alpha_i}, the Newton-direction $\vect{p}_\mathrm{N}$ is significantly parallel to $\Tang{\tuple p}{\Var E}.$
Retracting along $\vect{p}_{\text{N}}$ or the dogleg step is then expected to move $\tuple{p}$ to $\tuple{p}'$ along a path that could be mostly parallel to $\Var{E}$. The resulting point $\tuple{p}'$ would then also lie close to $\Var{E}$, i.e., close to one of the other decompositions of $\Phi(\tuple{q})$. Iterating this reasoning suggests that points close to $\Var{E}$ could be hard to escape with an RGN method.

\subsubsection{Open boundary tensors}
Let $\tensor{B}$ be an \emph{$r$-open boundary tensor}: $\tensor{B}$ has border rank \cite[section 2.4]{Landsberg2012} equal to $r$, but $\mathrm{rank}(\tensor{B})>r$. This means that there exist sequences of tensors $\Phi(\tuple{p}_i) \to \tensor{B}$ as $i \to \infty$, where $\tuple{p}_i \in \Var{S}^{\times r}$. The rank-$r$ TAP is ill-posed in this case, as only an infimum exists \cite{dSL2008}. CPDs $\tuple{p} \in\Var{S}^{\times r}$ with $\Phi(\tuple{p})$ near $\tensor{B}$ are also troublesome for RGN methods, as we argue next.

Let $\tuple{p}(t)=(p_1(t), \ldots, p_r(t)) \subset \Var S^{\times r}$ be a smooth curve with $\lim_{t\to0} \Phi(\tuple{p}(t)) = \tensor{B}$. The proof of \cite[theorem 1.4]{BV2017} shows that scalar functions $\nu_1(t),\ldots,\nu_r(t)$ exist so that
\[
 \vect{h}(t) =  \bigl( \nu_1(t) p_1(t),\, \nu_2(t) p_2(t),\, \ldots,\, \nu_r(t) p_r(t) \bigr) \in \Tang{\tuple{p}(t)}{\Var S^{\times r}}
\]
tends to a nonzero vector $\vect{h}_\star\in\ker T$, where $T := \lim_{t\to0} T_{\tuple{p}(t)}$ with $T_{\tuple{p}(t)}$ as in \refeqn{eqn_jacobian}. This limit $T$ exists, as
\(
 T_{\tuple{p}(t)} \in \operatorname{St}(\Sigma+1, \R^\Pi)^{\times r}
\)
lives in the $r$-fold product of the Stiefel manifold of matrices in $\R^{\Pi \times (\Sigma+1)}$ with orthonormal columns, which is closed.

Assume now that $\tuple{p} := \tuple{p}(\epsilon)$ and $\vect{h}:=\vect{h}(\epsilon)$ for some specific $\epsilon \approx 0$. Whenever $\Vert\vect{h}- \vect{h}_\star\Vert$ is small, the Newton direction $\vect{p}_{\mathrm{N}}$ in \refeqn{alpha_i} is expected to have a significant component in the direction of $\vect{h}$. Note that the entries of $\vect{h}$ are constant multiples of the $p_i(\epsilon)$'s. Hence, adding any multiple of $\vect{h}$ to $\tuple{p}$ yields an element of~$\Var{S}^{\times r}$. Many retraction operators $R_x : \Tang{x}{\Var{M}} \to \Var{M}$ on a manifold $\Var{M}$, including both retractions discussed in \refsec{sec_retraction}, are the identity when $x + \vect{h} \in \Var{M}$. In this case,
\[
\tuple{p}' :=
R_{\tuple{p}}\bigl(\vect{h} \bigr) =\tuple{p} +  \vect{h} = ((1+\nu_1(\epsilon))p_1(\epsilon), \ldots, (1+\nu_r(\epsilon))p_r(\epsilon)).
\]
As a result, only the norms of the rank-$1$ terms in the CPD are altered by such a retraction along $\vect{h}$.
Since the matrix $T_{\tuple{p}}$ is invariant under this scaling \cite[Proposition~4.4]{BV2017}, we have
\(
 T_{\tuple{p}} = T_{\tuple{p}'}.
\)
Hence, the same argument also applies at the next iterate~$\tuple{p}'$, suggesting again that CPDs $\tuple{p}$ with $\Phi(\tuple{p})$ close to an $r$-open boundary tensor $\tensor{B}$ could be hard to escape with RGN methods.

\section{The product ST-HOSVD retraction} \label{sec_retraction}
A characteristic ingredient of a Riemannian optimization method is an effective retraction operator. It appears in step {8} of \refalg{alg_rlm} for pushing forward the current iterate $\tuple{p}^{(k)} \in \Var{S}^{\times r}$ along a tangent vector $\vect{p}_k \in \Tang{\tuple{p}^{(k)}}{\Var{S}^{\times r}}$. The result is the (tentative) next iterate $\tuple{p}' \in \Var{S}^{\times r}$.

A \emph{retraction} is a map taking a tangent vector $\xi_p \in \Tang{p}{\Var{M}}$ to a manifold $\Var{M}$ at $p$ to the manifold itself. It can be regarded as an operator that approximates the action of the exponential map to first order \cite{AMS2008}. A formal definition is as follows \cite{ADMMS2002,AMS2008}.
\begin{definition}\label{def_retraction}
Let $\Var{M}$ be a manifold. A \emph{retraction} $R$ is a map $\Var{T}\Var{M} \to \Var{M}$ that satisfies all of the following properties for every $p \in \Var{M}$:
\begin{enumerate}
\item $R(p,0_p) = p$;
\item there exists {an open} neighborhood $\Var{N} \subset \Var{T}{\Var{M}}$ of $(p,0_p)$ such that the restriction $R|_\Var{N}$ is well-defined and a smooth map;
\item local rigidity: $\deriv{0_x}{R(x,\cdot)} = \operatorname{Id}_{\Tang{x}{\Var{M}}}$ for all $(x, 0_x) \in \Var{N}$.
\end{enumerate}
We let $R_p(\cdot) = R(p,\cdot)$ denote the retraction $R$ with foot at $p$.
\end{definition}

Choosing an efficient retraction is critical for attaining good computational performance.
Fortunately, we can exploit the following well-known result that on a product manifold, the product of individual retractions specifies a retraction.
\begin{lemma} \label{lem_product_retraction}
 Let $\Var{M}_1,\ldots,\Var{M}_r$ be manifolds. Let $R_i : \Var{T}\Var{M}_i \to \Var{M}_i$ be retractions. Then the following is a retraction on $\Var{M}_1 \times \cdots \times \Var{M}_r$:
 \begin{align*}
  R\bigl( (p_1, \xi_{p_1}), \ldots, (p_r, \xi_{p_r}) \bigr) = (R_1(p_1, \xi_{p_1}), \ldots, R_r(p_r, \xi_{p_r})),
 \end{align*}
 where $p_i \in \Var{M}_i$ and $\xi_{p_i} \in \Tang{p_i}{\Var{M}_i}.$
\end{lemma}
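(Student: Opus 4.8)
The plan is to verify directly the three defining properties of a retraction from \refdef{def_retraction} for the product map $R$, reducing each one to the corresponding property of the factor retractions $R_i$. The only structural ingredient needed is the canonical identification of the tangent bundle of a product with the product of the tangent bundles. Writing $\Var{M} := \Var{M}_1 \times \cdots \times \Var{M}_r$, recall the standard diffeomorphism $\Var{T}\Var{M} \cong \Var{T}\Var{M}_1 \times \cdots \times \Var{T}\Var{M}_r$ (see, e.g., \cite{Lee2013}), under which the tangent space at $p = (p_1,\ldots,p_r)$ decomposes as $\Tang{p}{\Var{M}} \cong \Tang{p_1}{\Var{M}_1} \times \cdots \times \Tang{p_r}{\Var{M}_r}$ and the zero vector $0_p$ corresponds to $(0_{p_1},\ldots,0_{p_r})$. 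Under this identification the product map $R$ is exactly the Cartesian product $R_1 \times \cdots \times R_r$.

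For property (1), I would evaluate $R$ at $((p_1,0_{p_1}),\ldots,(p_r,0_{p_r}))$, obtaining $(R_1(p_1,0_{p_1}),\ldots,R_r(p_r,0_{p_r})) = (p_1,\ldots,p_r) = p$ by applying property (1) to each $R_i$. For property (2), each $R_i$ is smooth on an open neighborhood $\Var{N}_i \subset \Var{T}\Var{M}_i$ of $(p_i,0_{p_i})$; then $\Var{N} := \Var{N}_1 \times \cdots \times \Var{N}_r$ is open in $\Var{T}\Var{M}_1 \times \cdots \times \Var{T}\Var{M}_r \cong \Var{T}\Var{M}$, contains $(p,0_p)$, and on it $R$ is the Cartesian product of the smooth maps $R_i|_{\Var{N}_i}$, hence smooth and well-defined. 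For property (3), I would hold the foot $p$ fixed and differentiate $R_p(\cdot) : \Tang{p}{\Var{M}} \to \Var{M}$ at $0_p$. Under the product identification this map is the Cartesian product of the maps $R_i(p_i,\cdot)$, so its differential at $0_p = (0_{p_1},\ldots,0_{p_r})$ decomposes componentwise into $\deriv{0_{p_1}}{R_1(p_1,\cdot)} \times \cdots \times \deriv{0_{p_r}}{R_r(p_r,\cdot)}$. Each factor equals $\operatorname{Id}_{\Tang{p_i}{\Var{M}_i}}$ by property (3), so the whole differential is $\operatorname{Id}_{\Tang{p}{\Var{M}}}$.

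The only thing requiring care, rather than a genuine obstacle, is making the canonical identifications precise: one must check that the diffeomorphism $\Var{T}\Var{M} \cong \prod_i \Var{T}\Var{M}_i$ carries the zero section, the fiberwise linear structure, and the operation of differentiation-in-the-fiber compatibly, so that the componentwise differential computation in the rigidity step is legitimate. These are all standard facts about products of smooth manifolds, and once the identifications are fixed the argument is pure bookkeeping with no analytic difficulty.
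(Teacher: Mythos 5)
Your proof is correct and complete: the componentwise verification of the three properties in \refdef{def_retraction}, together with the canonical identification $\Var{T}(\Var{M}_1 \times \cdots \times \Var{M}_r) \cong \Var{T}\Var{M}_1 \times \cdots \times \Var{T}\Var{M}_r$, is exactly the standard argument. The paper itself states this lemma as a well-known fact and gives no proof, so your write-up simply supplies the routine verification the authors chose to omit.
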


Because $\Var{S}^{\times r}$ is a product manifold, it thus suffices to find an efficient retraction for the Segre manifold $\Var{S}$.
Since $\Var{S}$ coincides with the manifold of tensors of multilinear rank $(1,\ldots,1)$, we can apply the truncated higher-order singular value decomposition (T-HOSVD) retraction from \cite[Proposition 2.3]{KSV2014}. This retraction is defined as the rank-$(1,\ldots,1)$ T-HOSVD approximation of $\tensor{A} + \vect{p}$ \cite{Lathauwer2000}, i.e.,
\(
\widehat{R}_{\tensor{A}}( \vect{p} ) := (\widehat{\vect{q}}_1 \widehat{\vect{q}}_1^T, \ldots, \widehat{\vect{q}}_d \widehat{\vect{q}}_d^T) \cdot (\tensor{A} + \vect{p}),
\)
where $\tensor{A} \in \Var{S} \subset \R^{n_1 \times \cdots \times n_d}$, $\vect{p} \in \Tang{\tensor{A}}{\Var{S}} \subset \R^{n_1 \times \cdots \times n_d}$, and $\widehat{\vect{q}}_k \in \R^{n_k}$ is the dominant left singular vector of the \textit{standard flattening} $(\tensor{A} + \vect{p})_{(k)}$ \cite{Lathauwer2000}.

Instead of applying the above T-HOSVD retraction, we propose a retraction based on the sequentially truncated HOSVD (ST-HOSVD) \cite{Hackbusch2012,VVM2012}, as its computational complexity is lower \cite{VVM2012}.
For rank-$1$ tensors $\tensor{A} \in \Var{S} \subset \R^{n_1 \times \cdots \times n_d}$, we define a map $\Var{T}\Var{S} \to \Var{S}$ as follows
\begin{align} \label{eqn_sthosvd_retraction}
 R_{\tensor{A}}( \vect{p} ) := (\vect{q}_1 \vect{q}_1^T, \ldots, \vect{q}_d \vect{q}_d^T) \cdot (\tensor{A} + \vect{p}),
\end{align}
where $\vect{p} \in \Tang{\tensor{A}}{\Var{S}} \subset \R^{n_1 \times \cdots \times n_d}$, and the $\vect{q}_k$'s are the factors of the rank-$(1,\ldots,1)$ ST-HOSVD of $\tensor{A} + \vect{p}$. While it is not indicated in the notation, one should remember that the vector $\vect{q}_i$ depends on $\vect{q}_1, \ldots, \vect{q}_{i-1}$ \cite{VVM2012}. We have the next result.

\begin{lemma} \label{lem_sthosvd_retraction}
The map in \refeqn{eqn_sthosvd_retraction} defines a retraction $\Var{T}\Var{S} \to \Var{S}$.
\end{lemma}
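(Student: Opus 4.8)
The plan is to verify the three axioms of \refdef{def_retraction} for the map $R$ in \refeqn{eqn_sthosvd_retraction}. The organizing idea is to reinterpret the rank-$(1,\ldots,1)$ ST-HOSVD as a composition of $d$ mode-wise metric projections. For $k=1,\ldots,d$, let $\Var{M}_k \subset \R^{n_1 \times \cdots \times n_d}$ be the set of tensors whose mode-$k$ flattening has rank at most $1$ (a smooth manifold away from the zero tensor, where the rank is exactly $1$), and let $\mathcal{P}_k(\tensor{C}) := (\vect{q}_k \vect{q}_k^T) \cdot_k \tensor{C}$ be the map replacing the mode-$k$ fibers of $\tensor{C}$ by their projection onto the dominant left singular vector $\vect{q}_k$ of the flattening $\tensor{C}_{(k)}$. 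Since best rank-$1$ approximation in the Frobenius norm is governed by the truncated matrix SVD, $\mathcal{P}_k(\tensor{C})$ is exactly the metric projection of $\tensor{C}$ onto $\Var{M}_k$, and by construction $R_{\tensor{A}}(\vect{p}) = (\mathcal{P}_d \circ \cdots \circ \mathcal{P}_1)(\tensor{A}+\vect{p})$. Because applying $\mathcal{P}_\ell$ (a multiplication in mode $\ell$) cannot raise the mode-$k$ rank for $k<\ell$, the output lies in $\bigcap_{k=1}^d \Var{M}_k$, which near $\tensor{A}$ coincides with $\Var{S}$; hence $R$ indeed maps into $\Var{S}$.

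First I would dispatch axioms~(1) and~(2). Axiom~(1) is immediate: a rank-$1$ tensor $\tensor{A}$ is fixed by each metric projection $\mathcal{P}_k$, so $R_{\tensor{A}}(0_{\tensor{A}}) = \tensor{A}$. For axiom~(2), observe that at $\vect{p}=0$ every intermediate tensor equals $\tensor{A}$, whose mode-$k$ flattening is rank $1$ and therefore has a strictly positive gap $\varsigma_1 > 0 = \varsigma_2$ between its two largest singular values. This gap persists on an open neighborhood $\Var{N}$ of $(\tensor{A},0_{\tensor{A}})$ in $\Var{T}\Var{S}$, so on $\Var{N}$ the dominant left singular subspace is simple and the rank-$1$ spectral projector $\vect{q}_k \vect{q}_k^T$ depends smoothly (in fact analytically) on the incoming tensor; the sign ambiguity in $\vect{q}_k$ is irrelevant because only $\vect{q}_k \vect{q}_k^T$ enters. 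As $R|_{\Var{N}}$ is a composition of such smooth maps, it is smooth, giving axiom~(2).

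The crux is the local rigidity condition~(3), i.e.\ $\deriv{0_{\tensor{A}}}{R_{\tensor{A}}}|_{\Tang{\tensor{A}}{\Var{S}}} = \operatorname{Id}$. Here I would use two facts. First, the derivative of a metric projection onto a smooth submanifold, taken at a point of that submanifold, is the orthogonal projection onto its tangent space; thus $\deriv{\tensor{A}}{\mathcal{P}_k} = \operatorname{proj}_{\Tang{\tensor{A}}{\Var{M}_k}}$. Second, writing $\tensor{A} = \vect{a}_1 \otimes \cdots \otimes \vect{a}_d$, every Segre tangent vector $\vect{p} = \sum_{j=1}^d \vect{a}_1 \otimes \cdots \otimes \vect{b}_j \otimes \cdots \otimes \vect{a}_d$ already lies in $\Tang{\tensor{A}}{\Var{M}_k}$ for each $k$, since its $j=k$ summand varies the mode-$k$ vector and its $j\neq k$ summands keep mode $k$ equal to $\vect{a}_k$; hence $\Tang{\tensor{A}}{\Var{S}} \subseteq \Tang{\tensor{A}}{\Var{M}_k}$. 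Because $\mathcal{P}_k(\tensor{A}) = \tensor{A}$ for every $k$, the chain rule evaluates every factor at the same base point $\tensor{A}$, so that $\deriv{\tensor{A}}{(\mathcal{P}_d \circ \cdots \circ \mathcal{P}_1)} = \operatorname{proj}_{\Tang{\tensor{A}}{\Var{M}_d}} \circ \cdots \circ \operatorname{proj}_{\Tang{\tensor{A}}{\Var{M}_1}}$. Applying this to $\vect{p} \in \Tang{\tensor{A}}{\Var{S}}$ and invoking the inclusion repeatedly leaves $\vect{p}$ fixed at each step, which yields $\deriv{0_{\tensor{A}}}{R_{\tensor{A}}}(\vect{p}) = \vect{p}$ and establishes~(3).

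The main obstacle is making the rigidity step fully rigorous, namely establishing the derivative-of-metric-projection identity together with the required uniformity of the smooth dependence of $\vect{q}_k \vect{q}_k^T$ along the sequential composition. I expect to prove these by explicit first-order singular-value perturbation theory: linearizing the dominant spectral projector $P_k(t)$ of the mode-$k$ flattening of the $t$-dependent intermediate tensor, one computes $\dot{P}_k(0)\,\vect{a}_k = (I - \operatorname{proj}_{\vect{a}_k})\vect{b}_k$, where $\operatorname{proj}_{\vect{a}_k}$ is the orthogonal projector onto $\cspan(\vect{a}_k)$, and then the telescoping identity $\dot{P}_k \cdot_k \tensor{A} + \operatorname{proj}_{\vect{a}_k} \cdot_k \vect{p} = \vect{p}$ shows inductively that the derivative of each partial composition acts as the identity on $\vect{p}$. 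The delicate point is to confirm that the derivative at mode $k$ depends only on the value and the first derivative at $t=0$ of the preceding intermediate tensor, both of which the induction controls.
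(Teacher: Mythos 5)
Your proof is correct, but it takes a genuinely different route from the paper. The paper disposes of \reflem{lem_sthosvd_retraction} by citing the argument of Kressner, Steinlechner, and Vandereycken for the T-HOSVD retraction: there, local rigidity is deduced from the \emph{quasi-best approximation property} $\|\tensor{C} - R(\tensor{C})\|_F \le \sqrt{d}\, \operatorname{dist}(\tensor{C}, \Var{S})$ (supplied for the ST-HOSVD by Hackbusch's Theorem~10.5), combined with the fact that a tangential perturbation $\tensor{A}+\vect{p}$ leaves the manifold only to second order in $\|\vect{p}\|$, so that $R_{\tensor{A}}(\vect{p}) = \tensor{A}+\vect{p} + O(\|\vect{p}\|^2)$. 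You instead exploit a structural feature specific to the \emph{sequential} truncation: it factors exactly as $\mathcal{P}_d \circ \cdots \circ \mathcal{P}_1$, where each $\mathcal{P}_k$ is the metric projection onto the set $\Var{M}_k$ of tensors of mode-$k$ rank at most one, and rigidity then follows from the chain rule at the common fixed point $\tensor{A}$ together with the inclusions $\Tang{\tensor{A}}{\Var{S}} \subseteq \Tang{\tensor{A}}{\Var{M}_k}$. (In fact your argument needs even less than the ``derivative of a metric projection is the tangent projection'' identity: since $\mathcal{P}_k$ restricts to the identity on $\Var{M}_k$ near $\tensor{A}$ and is differentiable there, its derivative already acts as the identity on $\Tang{\tensor{A}}{\Var{M}_k}$, which is all the induction uses; this also makes your closing worry about first-order perturbation formulas largely unnecessary.) The trade-off: your argument is self-contained, avoids the $\sqrt{d}$ quasi-optimality constant, and makes it transparent that the processing order is irrelevant; the paper's route is shorter by citation and applies verbatim to any truncation satisfying a quasi-best approximation bound, including the non-sequential T-HOSVD, which does \emph{not} factor into metric projections in your sense, and to higher multilinear ranks.
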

\begin{proof}
 The proof is a verbatim copy of the proof of \cite[Proposition 2.3]{KSV2014}, where the necessary quasi-best approximation property is \cite[Theorem 10.5]{Hackbusch2012}.
\end{proof}

Let $\tuple{p} = (p_1,\ldots,p_r) \in \Var{S}^{\times r}$ and let $\vect{t} \in \Tang{\tuple{p}}{\Var{S}^{\times r}}$. The tangent vector $\vect{t}$ is given in coordinates by $\vect{x} \in \R^{r(\Sigma+1)}$ with respect to the orthonormal basis $B$ in \refeqn{eqn_orth_basis}. Then, the product retraction on $\Var{S}^{\times r}$ is
\[
 R_{\tuple{p}}(\vect{t}) := \bigl( R_{p_1}(T_{p_1} \vect{x}_1), \ldots, R_{p_r}(T_{p_r} \vect{x}_r) \bigr),
\]
where $\vect{x}^T = \begin{bmatrix} \vect{x}_1^T & \cdots & \vect{x}_r^T \end{bmatrix}$. An efficient implementation is given in \refsec{sec_fast_retraction}.

\begin{remark}
The above product ST-HOSVD retraction was proved valid only in a neighborhood $\Var{N} \subset \Var{T}\Var{S}^{\times r}$ about $(\tuple{p}, 0)$ by \refprop{lem_product_retraction} and \reflem{lem_sthosvd_retraction}. Before applying the retraction $R_{\tuple{p}}$ to $\vect{t}$, in theory, one should verify that $(\tuple{p}, \vect{t}) \in \Var{N}$. In practice, this strict regime is not enforced; retractions generally are also applied outside of the region where the smoothness of $R|_{\Var{N}}$ can be proved. In our implementation, the retraction is applied to all inputs, regardless of whether $(\tensor{A}, \vect{t}) \in \Var{N}$.
\end{remark}

\section{Numerical experiments} \label{sec_numerical_experiments}
RGN-HR was implemented\footnote{Our implementation can be obtained from \url{https://arxiv.org/abs/1709.00033}.} in Matlab, employing some functionality of Tensorlab v3.0 \cite{Tensorlab}.
All numerical experiments were conducted in Matlab R2017a using 2 computational threads on a computer system comprising two Intel Xeon E5-2697 v3 CPU's with 14 cores each (all clocked at 2.6GHz) and 128GB of total main memory.

We compare RGN-HR with state-of-the-art nonlinear least-squares (NLS) solvers designed for the TAP from Tensorlab v3 \cite{Tensorlab}, specifically the trust region method with dogleg steps, called \verb|nls_gndl|. It employs factor matrices as parameterization and uses the GN approximation of the Hessian of \refeqn{eqn_optimization_problem_classic}, i.e., $J^T J$ with $J$ as in \refeqn{eqn_overparam_jacobian}, as explained in \refsec{sec_stateoftheart}. As a result, the normal equations are ill-posed, and, hence, the Newton step $\vect{p}_{\text{N}}$ is not well defined. Therefore, Tensorlab (approximately) computes the minimum-norm solution of the least-squares problem $J \vect{p}_{\text{N}} = -\vect{r}$, where $\vect{r}$ is the residual. Both {direct} (\verb|LargeScale = false|) and {iterative} (\verb|LargeScale = true|) solution of this least-squares problem are supported. The direct algorithm computes the Moore-Penrose pseudoinverse and applies it to $-\vect{r}$, while the iterative method is LSQR with block Jacobi preconditioner; see \cite{Sorber2013a} and the user manual of Tensorlab \cite{Tensorlab} for details. We refer to them as GNDL and GNDL-PCG, respectively.
Tensorlab offers two additional NLS solvers (\texttt{nls\_lm} and \texttt{nls\_cgs}) that we did not find competitive.

GNDL is Tensorlab's default method for computing dense small-scale CPDs, i.e., $r(\Sigma+d) \le 100$, whereas GNDL-PCG is the default method for computing CPDs in the case $r(\Sigma+d) > 100$. It should be stressed that for fixed $d$, GNDL and RGN-HR both \emph{have the same asymptotic computational complexity}. Hence, this is a comparison between methods appropriate for the same class of TAPs, i.e., dense, small-scale, low-residual problems. On the other hand, one may not expect that RGN-HR will be competitive with GNDL-PCG for medium and large-scale problems with $r(\Sigma+d)$ much larger than $1000$. The reason is that GNDL-PCG was specifically designed for such problems by crudely approximating the Newton direction using an asymptotically faster algorithm. Indeed, the complexity per iteration of RGN-HR is roughly $r^3 (\Sigma+d)^3$, while GNDL-PCG's complexity is only $k_{\text{CG}} \cdot r^2 (\Sigma+d)^2$. The exact Newton direction is recovered for $k_{\text{CG}} = \mathcal{O}( r(\Sigma+d) )$. Recall from the introduction that small-scale dense TAPs often result from an orthogonal Tucker compression of a medium or large-scale tensor; hence, solving small-scale TAPs is an important computational kernel.

For {the above} reason, we experiment with dense, small-scale CPDs. We compare RGN-HR with GNDL and GNDL-PCG for some order-$3$ tensors; namely, in \refsec{sec_model1} for tensors in $\R^{15\times 15 \times 15}$ and in  \refsec{sec_model2} for tensors in $\R^{13\times 11 \times 9}$.

\subsection{Experimental setup}
Let $\tensor{A} \sim \Var{N}$ denote that the entries of $\tensor{A} \in \R^{n_1 \times \cdots \times n_d}$ are identically and independently distributed (i.i.d.)\ following a standard normal distribution.
For the tensor models in \cref{sec_model1,sec_model2} below, the performance of methods for solving TAPs is evaluated as follows:
\begin{enumerate}
 \item Randomly sample an order-$d$ rank-$r$ decomposition $\tuple{p} \in \Var{S}^{\times r}$ from one of the models, and let $\tensor{A} := \Phi(\tuple{p})$;
 \item create a perturbed tensor $\tensor{B} := \frac{\tensor{A}}{\|\tensor{A}\|} + 10^{-e} \frac{\tensor{E}}{\|\tensor{E}\|}$, where $\tensor{E} \sim \Var{N}$ and $e >0$;
 \item randomly sample factor matrices $\tuple{q} = (A_1,\ldots,A_d) \in \Var{E} := \R^{n_1 \times r} \times \cdots \times \R^{n_d \times r}$ with $A_k \sim \Var{N}$; and
 \item solve the TAP for $\tensor{B}$ from this random starting point $\tuple{q}$ with each method.
\end{enumerate}
Our main performance criterion is the \emph{expected time to success} (ETS), which measures how much time must be spent on average to find a solution of the TAP with method M by trying multiple random starting points if necessary. Let $t_{\mathrm{success}}$ denote the average time (in seconds) a successful attempt takes to compute a solution of the TAP, and let $p_{\mathrm{success}}$ denote the fraction of successful attempts. Similarly, write $t_{\text{fail}}$ for the average time (in seconds) needed for an unsuccessful attempt. Then,
\[
 \text{ETS}_\text{M} := {\sum_{k=0}^\infty p_{\mathrm{success}} \, p_{\text{fail}}^k (t_{\mathrm{success}} + k \cdot t_{\text{fail}})  = \frac{p_{\text{fail}} \, t_{\text{fail}} + p_{\mathrm{success}} \, t_{\mathrm{success}} }{p_{\mathrm{success}}}.}
\]
We call $\tuple{p}' \in \Var{S}^{\times r}$ a \emph{(local) solution} of the TAP if its residual is within $10\%$ of the error level, i.e., if $\|\tensor{A} - \Phi(\tuple{p}')\| \le 1.1 \cdot 10^{-e}$, and its condition number is within a factor $50$ of the condition number of the CPD $\tuple{p}$, i.e., $\kappa(\tuple{p}') \le 50 \kappa(\tuple{p})$.
In our comparisons, we present the \emph{speedup} ${{\text{ETS}}_{\text{M}}}/{{\text{ETS}}_{\text{RGN-HR}}}$ of RGN-HR relative to method M.

The ETS was {empirically} estimated for each of the methods by estimating $p_{\mathrm{success}}$, {$t_{\text{fail}},$ and $t_{\mathrm{success}}$} via their corresponding sample statistics; we sample only one tensor from the model (in step 1), one perturbation (in step 2), and $k \in \mathbb{N}$ random starting points (in step 3). For the models in \cref{sec_model1,sec_model2} we took $k = 25$ and $k = 50$, respectively.

\subsection{Parameter choices}
Both in RGN-HR and Tensorlab some parameters related to the stopping criterion should be selected. The \verb|nls_gndl| method has the same $4$ standard parameters as RGN-HR, described in \refsec{sec_stopping_criterion}, namely $\tau_f$, $\tau_{\Delta f}$, $\tau_{\Delta x}$, and $k_{\max}$. In addition, for RGN-HR we have a maximum number of restarts $r_{\max}$, and for the \verb|nls_gndl| with \verb|LargeScale = true| we have a relative tolerance on the improvement of the residual in the LSQR method $\tau_{\text{CG}}$, and a maximum number of iterations of the LSQR method $k_{\text{CG}}$.
In every experiment, $\tau_f = 0$, $\tau_{\text{CG}} = 10^{-6}$, $k_{\text{CG}} = 75$, and $r_{\max} = 500$ was selected. Furthermore, we selected $\tau_{\Delta f} = 10^{-2e}$ for RGN-HR, and $\tau_{\Delta f} = 10^{-2(e+1)}$ for the Tensorlab methods. We suggest this last distinction, as we generally observed that Tensorlab's ETS would be worse if we took $10^{-2e}$, as it negatively affected its probability of success.

\subsection{Model 1}\label{sec_model1}
The first model is a family {$\Var{F}_{r}(c,s)\subset \R^{15\times r} \times \R^{15\times r}\times \R^{15\times r}$} with $2$ parameters in which the factor matrices have correlated columns, controlled by $0 \le c < 1$, and the norms of the rank-$1$ terms approximately satisfy an exponential increase from about $1$ to about $10^s$, where $s \in \mathbb{N}$. Let $R_c$ be the upper triangular factor in the Cholesky decomposition $R_c^T R_c := c \vect{1} \vect{1}^T + (1-c) I$, where $\vect{1} \in \R^r$ is the vector of ones. Then,
\begin{multline*} 
 \Var{F}_{r}(c,s) := \bigl\{ (A_1, A_2, A_3) \in \Var{E} \;|\;
 A_k = N_k \, R_c \, \operatorname{diag}( 10^{\frac{s}{3r }}, 10^{\frac{2 s}{3r }}, \ldots, 10^{\frac{rs}{3r }} ), \\ \text{ where } N_k \sim \Var{N},\; k = 1, 2, 3 \bigr\}.
\end{multline*}
We observed empirically that for fixed $r,s$, increasing $c$ increases the condition number of the CPD. For $(r,c,s,e) \in \{ 15, 20, 25, 30 \} \times \{0, \tfrac{1}{4}, \tfrac{1}{2}, \tfrac{3}{4}, 0.95\} \times \{1, 2, 3, 4\} \times \{3,5,7\}$, we sampled a random CPD from $\Var{F}_{r}(c,s)$, applied one random perturbation (as in step 2 of the experimental setup), generated $k = 25$ random starting points from whence each of the optimization methods starts, and record the time and whether the CPD was a solution. The ETS is estimated from these data. The tested methods were RGN-HR, a variant of RGN-HR called RGN-Reg {described below}, GNDL, and GNDL-PCG.\footnote{In fact, we also tested Tensorlab's \texttt{nls\_lm} with both direct and iterative solves, i.e., with the \texttt{LargScale} option set alternatively to \texttt{false} and \texttt{true}. It was not competitive with the methods in \cref{fig_model1,fig_model3}, successfully solving only $31\%$, respectively $58\%$, of the combinations.} We took $\tau_{\Delta x} = 10^{-12}$ and $k_{\max} = 1500$. The speedups of RGN-HR with respect to the competing methods are shown in \reffig{fig_model1}. A value $\alpha$ means that the corresponding method has an ETS that is $\alpha$ times the ETS of RGN-HR; $\alpha=\infty$ means that the competing method could not solve the TAP. 

\begin{figure}[p] \centering\small
\vspace{-1em}
\begin{subfigure}{\textwidth}
\includegraphics[width=\textwidth]{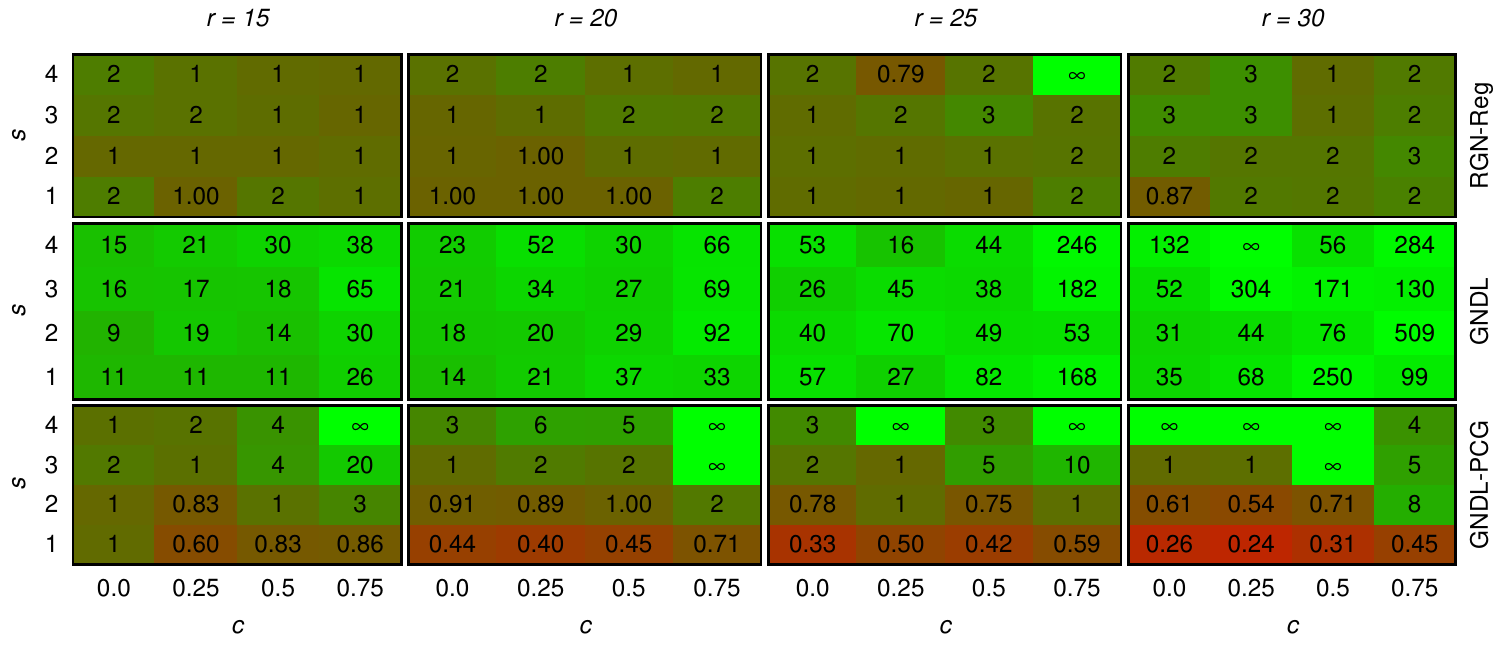}
\caption{$e = 3$}
\end{subfigure}

\begin{subfigure}{\textwidth}
\includegraphics[width=\textwidth]{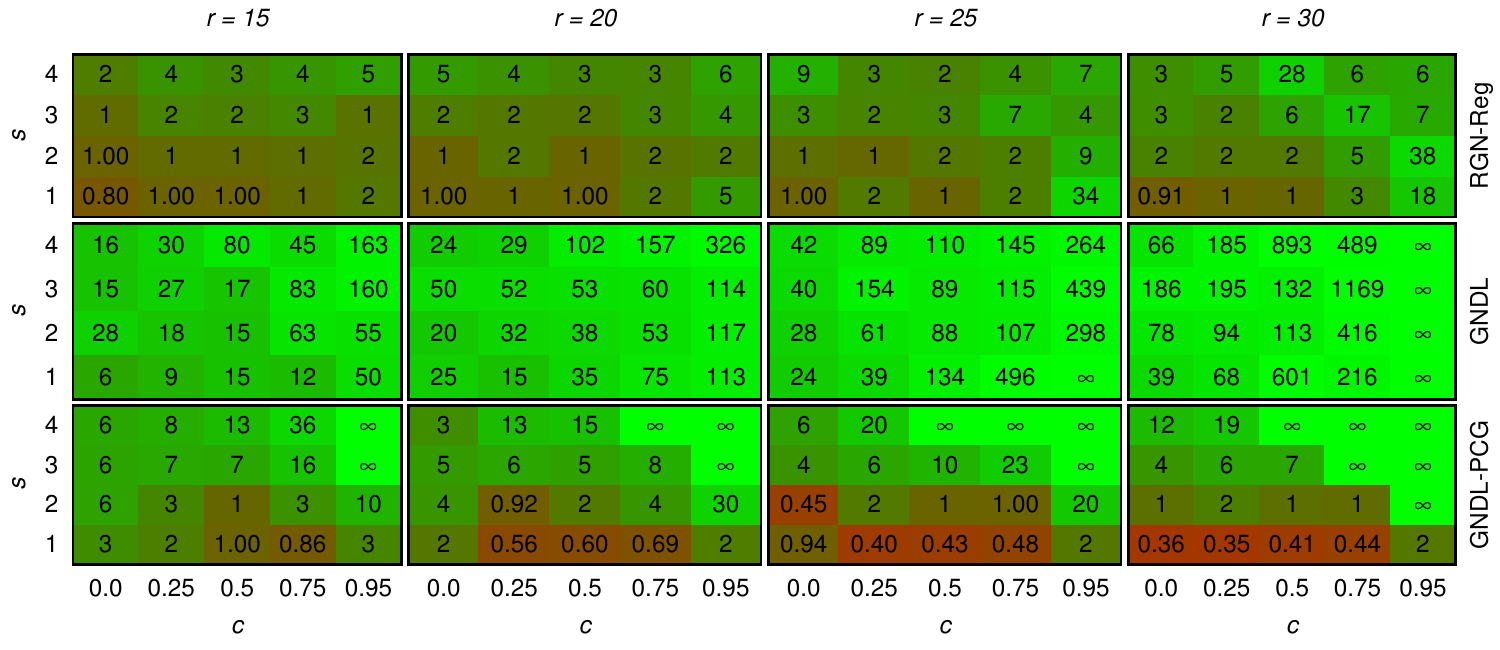}
\caption{$e = 5$}
\end{subfigure}

\begin{subfigure}{\textwidth}
\includegraphics[width=\textwidth]{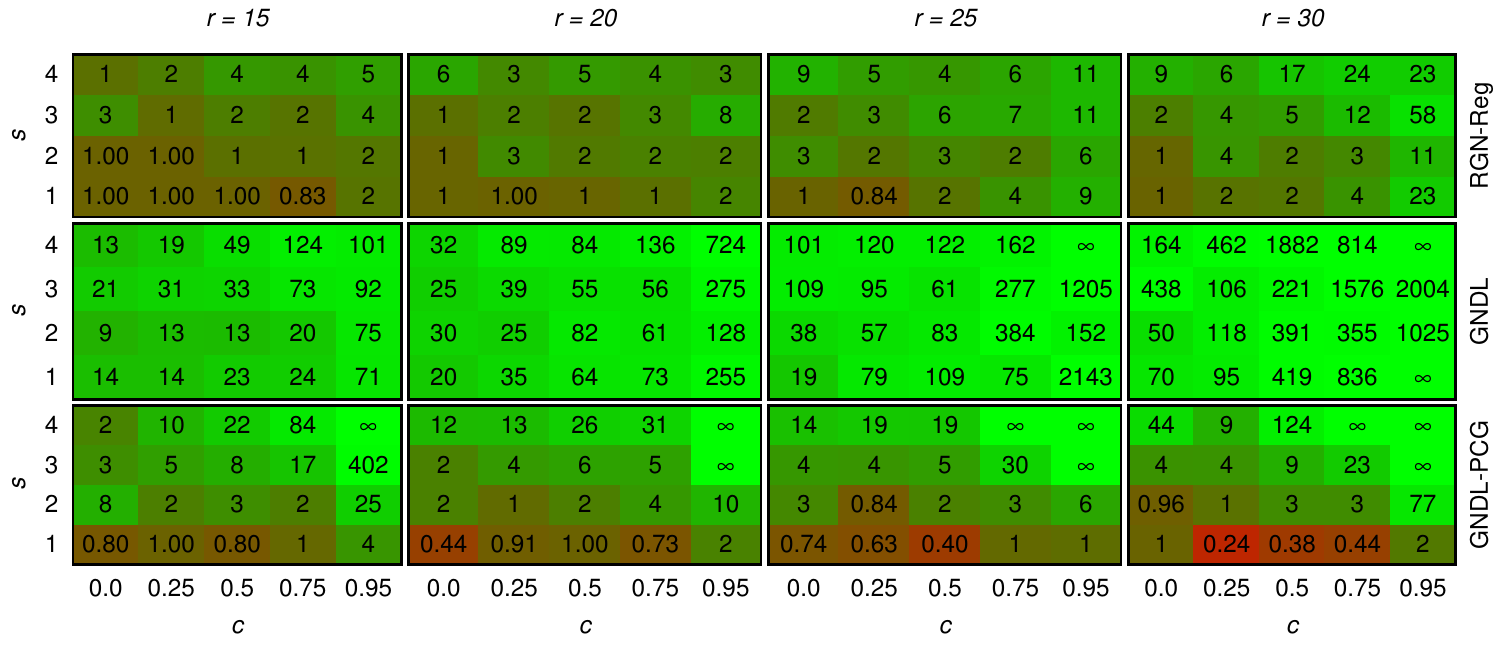}
\caption{$e = 7$}
\end{subfigure}

\caption{Speedups in terms of the ETS (based on $25$ samples) of RGN-HR with respect to RGN-Reg, and Tensorlab's GNDL and GNDL-PCG on tensors sampled from model $\Var{F}_{r}(c,s)$ with perturbation factors $e = 3, 5, 7$.} \label{fig_model1}
\end{figure}

We do not include the results for the experiment's parameter choices $e = 3$ and $c = 0.95$ because the condition number ranged between about $223$ and $1186$ for these configurations. At error level $e = 3$, this entails that the rank-$1$ terms in the computed CPD can only be guaranteed to capture between $0$ and $0.65 \approx -\log_{10}(223 \cdot 10^{-3})$ significant correct digits of the true CPD. Hence, these ``results'' are meaningless.

RGN-Reg is a variation of RGN-HR without hot restarts. Ill-conditioned normal equations are resolved by applying Tikhonov regularization:
\[
 \Biggl(H_{\tuple{p}} + 10^{-10} \Bigl(\frac{\|\Phi(\tuple{p}) - \tensor{B}\|}{\|\tensor{B}\|} \Bigr)^{\frac{3}{4}} \| H_{\tuple{p}} \|_F \, I \Biggr) \vect{p}_{\text N} = - \vect{g}_{\tuple{p}},
\]
where the notation is as in \refeqn{eqn_GN_and_CP}.
The reason for including this method is to illustrate that the proposed hot restarts mechanism provides a superior way of handling ill-conditioned Hessian approximations. The top rows of the tables in \reffig{fig_model1}(a)--(c) show the speedup of RGN-HR with respect to RGN-Reg. In almost all configurations, the hot restarts of the former significantly outperformed the latter's regularization.

The experiments show that RGN-HR outperforms both GNDL and GNDL-PCG in this small-scale model. Specifically, it outperforms GNDL, by at least $1$ and up to $3$ orders of magnitude in almost all configurations.
This difference is partly explained by the way in which GNDL solves $J \vect{p}_{\text N} = -\vect{r}$, namely by explicitly applying the pseudoinverse of $J$. This is more expensive than solving a linear system via a Cholesky factorization.\footnote{For square matrices computing an SVD is $20$ times as expensive; see \cite[p. 78 and p. 239--240]{ANLA}.} The other gains are attributed to the reduction of the number of iterations, which can be ascribed to only two key differences between RGN-HR and GNDL, namely hot restarts for avoiding ill-conditioned CPDs altogether; and the Riemannian formulation.
Indeed, both methods use a trust region, employ dogleg steps, and feature similar stopping criteria and parameter selections.
It is particularly noteworthy that RGN-HR's performance greatly improves relative to GNDL-PCG (and GNDL) when $s$ increases. The reason is that they require more iterations to converge, while RGN-HR's number of iterations remains relatively stable. \emph{The analysis in \refsec{sec_convergence} already predicted this outcome.}%

For a very significant fraction of starting points, the state-of-the-art methods halt at extremely ill-conditioned CPDs $\tuple{p}'$. The table below shows the fraction of cases where $\kappa(\tuple{p}) > 10^e$ among those CPDs $\tuple{p}'$ whose backward error is very small, namely $\|\Phi(\tuple{p}') - \tensor{B}\|_F \le 1.1 \cdot 10^{-e}$. Since the forward error is asymptotically bounded by $\kappa(\tuple{p}) \|\Phi(\tuple{p}') - \tensor{B}\|_F \approx 1.1$, this means that such CPDs are \emph{completely uninterpretable}: no correct significant digits are present in the individual rank-$1$ terms, unless their norms would be orders of magnitude larger than $\|\Phi(\tuple{p}')\|_F$. Nowadays the backward error is still the dominant criterion to ``determine'' if a computed CPD is ``a solution'' of the TAP. The table below highlights just how dangerous this practice is:
\[\footnotesize
 \begin{array}{lrrrr}
  \toprule
  e & \text{RGN-HR} & \text{RGN-Reg} & \text{GNDL} & \text{GNDL-PCG} \\
  \midrule
  3 & 23.6\%   &	42.3\% &	51.0\% &	50.8\% \\
  5 & 0.0\%    & 	10.9\% &	9.1\%  &	9.3\% \\
  7 & 0.0\%    &	0.0\%  &	0.0\%  &	0.0\% \\
  \bottomrule
 \end{array}
\]
Specifically for tensors corrupted by large amounts of noise, i.e., small $e$, the situation is dramatic. For $e=3$, about \emph{half} of the CPDs returned by the state-of-the-art, default solver in Tensorlab v3.0 whose backward error is as good as one can hope, i.e., about $10^{-e}$, are extremely ill-conditioned. The proposed RGN-HR method explicitly attempts to avoid this problem via hot restarts. The table shows that it is fairly successful compared to the other methods, which do not use hot restarts, halving the fraction of evidently spurious CPDs to less than $\frac{1}{4}$ for $e=3$, and eliminating the issue for $e = 5, 7$. As in \cite{V2017,BV2017} we caution that a CPD can only be interpreted meaningfully if both the backward error and the condition number are small.

\subsection{Model 2}\label{sec_model2}
Another challenging family is $\Var{G}_{r}(s)\subset \R^{13\times r}\times \R^{11\times r} \times \R^{9\times r}$, where $s \ge 0$ is a parameter that controls how close the tensor is to the variety of tensors of multilinear rank componentwise bounded by $(r_1,r_2,r_3) = (5,5,5)$. Specifically,
\begin{multline*} 
 \Var{G}_{r}(s) := \{ (A_1, A_2, A_3) \in \Var{E} \;|\; A_k = N_k (10^{\frac{2-s}{2}} I_r + X_k Y_k^T ) \diag( 5^{\frac{0}{r-1}}, \ldots, 5^{\frac{r-1}{r-1}}), \\
 \text{where } X_k, Y_k \in \R^{r \times r_k} \text{ and } N_k, X_k, Y_k \sim \Var{N},\; k=1,2,3 \}.
\end{multline*}
Empirical evidence shows that for fixed $r$, letting $s \to 0$ increases the CPD's condition number; a theoretical argument was sketched in \cite[section 6.1]{BV2017}.

For each $(r,s) \in \{ 5, 7, 9, 11, 13 \} \times \{0,1,\ldots,4\}$, we sampled one CPD from $\Var{G}_{r}(s)$, applied one random perturbation with $e=5$, generated $k = 50$ random starting points from whence each of the methods starts, and recorded the time and whether the CPD was a solution. From these data, the ETS is estimated. We tested RGN-HR, GNDL, and GNDL-PCG with $\tau_{\Delta x} = 10^{-15}$ and $k_{\max} = 7500$. The speedups are shown in \reffig{fig_model3}.

\begin{figure}[tb]\centering\small
\includegraphics[width=.7\textwidth]{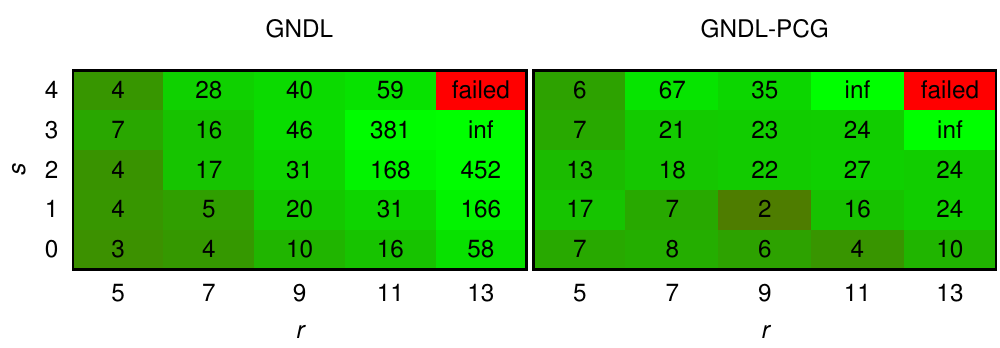}
\caption{Speedups in terms of the ETS (based on $50$ samples) of RGN-HR with respect to RGN-Reg, and Tensorlab's GNDL and GNDL-PCG on tensors sampled from model $\Var{G}_{r}(s)$ with perturbation factors $e = 5$. The label ``failed'' indicates that none of the methods could solve the problem in $50$ attempts.}
\label{fig_model3}
\end{figure}

Broadly the same observations hold as with the previous model: RGN-HR is up to $2$ orders of magnitude faster than GNDL, and up to $1$ order of magnitude faster than GNDL-PCG. In all configurations, RGN-HR was the fastest method.

\begin{figure}\centering \small
\includegraphics[width=.49\textwidth]{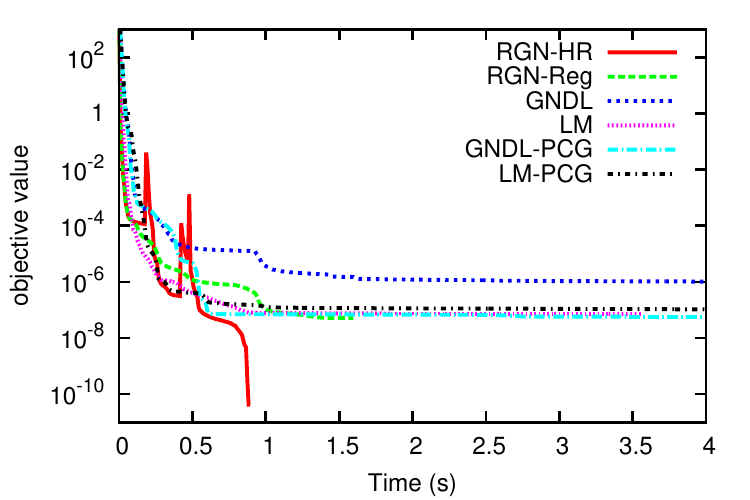}
\includegraphics[width=.49\textwidth]{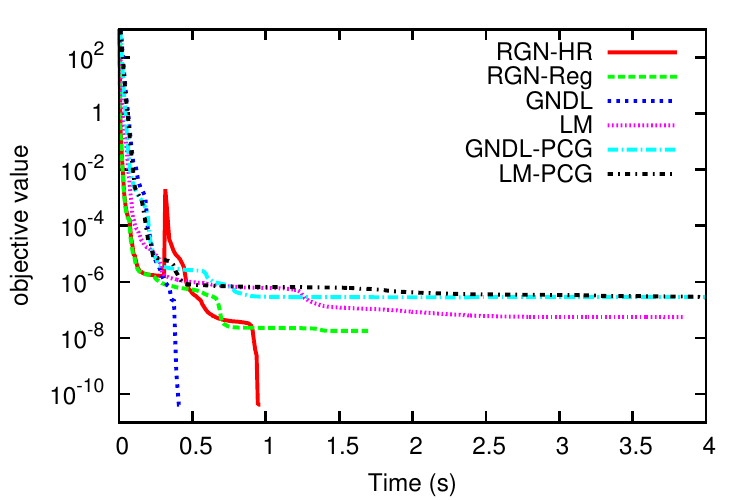} 
\includegraphics[width=.49\textwidth]{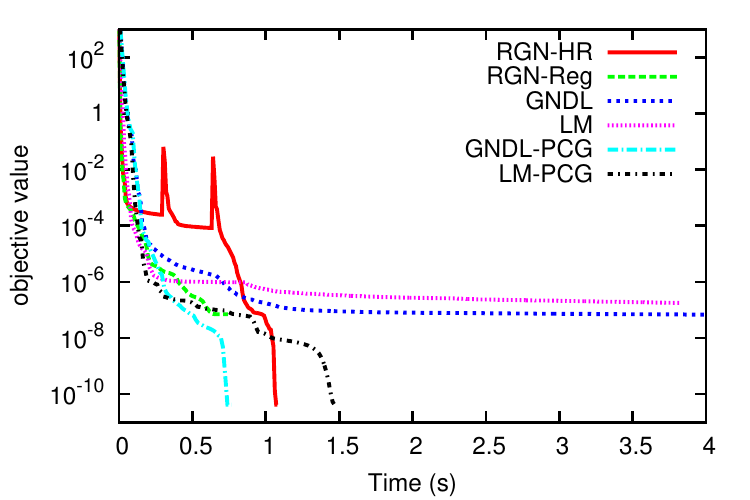}
\includegraphics[width=.49\textwidth]{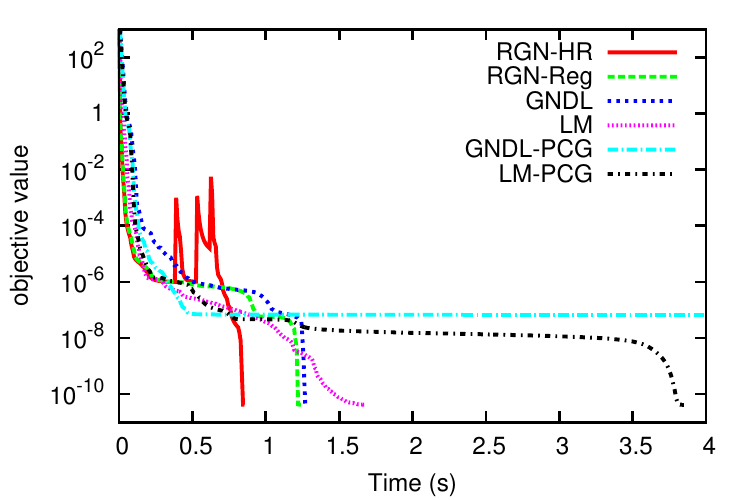}
\caption{Randomly selected convergence graphs for the model $\Var{G}_{7}(2)$ with error level $e=5$. The objective value $f(\tuple{p}) = \frac{1}{2}\|\Phi(\tuple{p}) - \tensor{B}\|_F^2$ is plotted versus the time spent for the proposed RGN-HR method, the variant RGN-Reg, and Tensorlab's GNDL, GNDL-PCG, LM and LM-PCG methods.}
\label{fig_model3_convergence}
\end{figure}

Finally, in \reffig{fig_model3_convergence} we show some convergence plots for this model with rank $r=7$ and $s=2$. It includes Tensorlab's \texttt{nls\_lm} method, a Levenberg--Marquardt method, both with direct solves and iterative solves of the least-squares problem $J \vect{p}_{\text{N}} = -\vect{r}$; we refer to them as respectively LM and LM-PCG.

The effect of the hot restarts in RGN-HR can clearly be seen in \reffig{fig_model3_convergence}. Observe in particular that a hot restart is often triggered after a brief period of stagnation of the convergence. {Contrast this with the prolonged periods of stagnation in all of the other methods, including RGN-Reg.} We surmise that stagnation is often \emph{caused} by ill-conditioned Hessian approximations. In the plots on the right we see that the progress of RGN-HR and RGN-Reg is almost identical up to $0.4$ seconds. At that point, the former detects that the condition number is too high and applies a hot restart, while the latter steadily reduces the objective value. Nevertheless, in the end, RGN-HR converges faster.

\section{An application} \label{sec_application}
The proposed algorithm can also be applied efficiently to larger data sets appearing in real applications. Fluorescence spectroscopy is an imaging technique that can be employed to detect the concentration of certain chemical compounds, called \emph{fluorophores}, in a diluted mixture. This inexpensive analysis technique is widely employed in the life sciences; for example, it is employed for the identification of dissolved organic material in natural and waste water \cite{HBR2007} and in food chemistry \cite{SBG2004}, among others. The theoretical model underlying the \textit{emission--excitation matrices} that are obtained from a fluorescence spectroscopy analysis of a diluted, inert mixture of several fluorophores is the tensor rank decomposition \cite{AD1981}.

The tensor we consider was obtained from fluorescence spectroscopy measurements of five mixtures of three amino acids; in ideal circumstances the rank of the tensor would thus be $3$.\footnote{The data can be obtained at \url{http://www.models.life.ku.dk/Amino_Acid_fluo}.} Detailed information about the acquisition of the data can be found in \cite{Bro1998,Kiers1998}. The size of the tensor is $5 \times 201 \times 61$; the first factor corresponds to the mixtures, the second to the emission wavelengths ($250$--$450$ nm in steps of $1$ nm), and the third to the excitation wavelengths ($250$--$310$ nm in steps of $1$ nm).

\begin{figure}\centering\small
 \includegraphics[width=.49\textwidth]{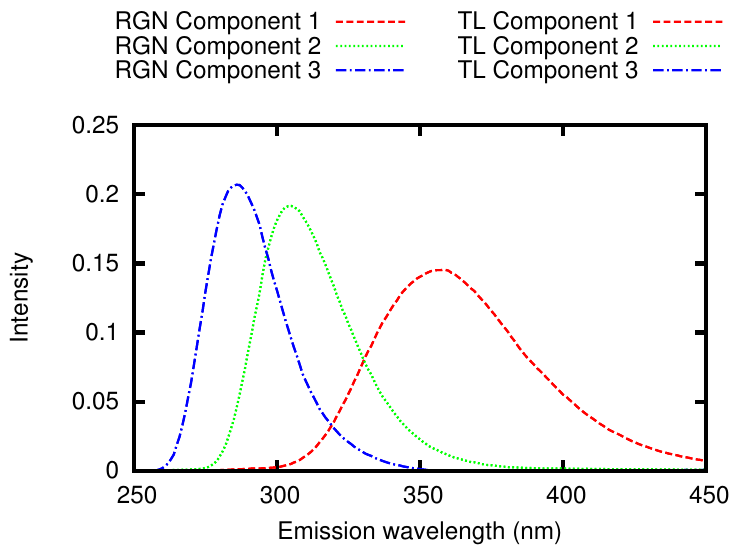}
 \includegraphics[width=.49\textwidth]{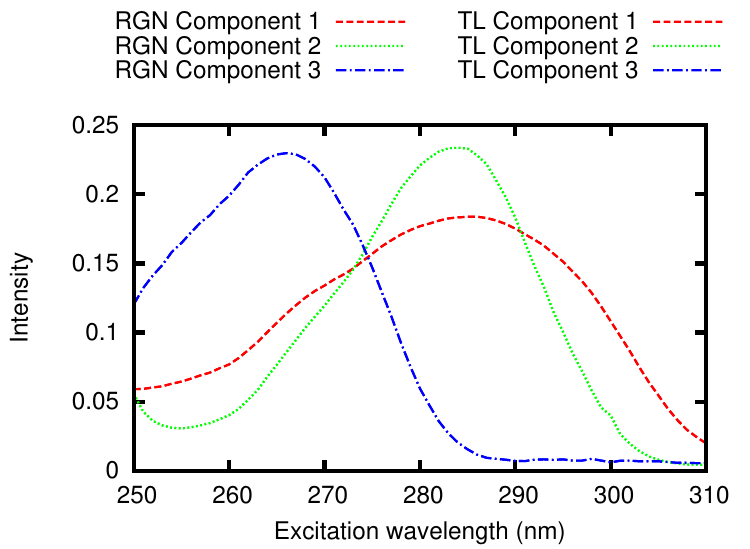}
\caption{Emission and excitation spectra of the three identified components in the mixtures of the amino acids using both the proposed algorithm (RGN) and Tensorlab's \texttt{cpd\_nls} algorithm (TL). We stress that all 12 spectra are plotted; the spectra recovered by the two methods are visually indistinguishable.}
\label{fig_real_data}
\end{figure}

As explained in \refrem{rem_tucker_compression}, it is common for large but low-rank tensors to start with a Tucker compression. We approximated $\tensor{B} \in \R^{5 \times 201 \times 61}$ by a rank-$(5,6,6)$ ST-HOSVD \cite{VVM2012} $(Q_1, Q_2, Q_3) \cdot \tensor{S} \approx \tensor{B}$ using Tensorlab's \texttt{mlsvd} function. This took about $0.01$ seconds, and results in a relative error of $1.236 \cdot 10^{-2}$. Both RGN-HR and GNDL-PCG were then applied to $\tensor{S}$ to compute a rank-$3$ approximation, taking about $0.08$ and $0.06$ seconds respectively. Let $(M_1,M_2,M_3)$ and $(M_1', M_2', M_3')$ denote their respective factor matrices. Then, $(Q_1 M_1, Q_2 M_2, Q_3 M_3)$ and $(Q_1 M_1', Q_2 M_2', Q_3 M_3')$ are the factor matrices of the corresponding rank-$3$ tensors $\tensor{B}_3$ and $\tensor{B}_3'$. The relative error between $\tensor{B}_3$ and $\tensor{B}$ was approximately equal to $2.50493697 \cdot 10^{-2}$, and likewise for $\tensor{B}_3'$. Computing the best-$3$ approximation directly from $\tensor{B}$, without Tucker compression, yields an approximation error of about $2.505 \cdot 10^{-2}$ in both cases.

The columns of the second factor matrices of $\tensor{B}_3$ and $\tensor{B}_3'$ represent the emission spectra of the $3$ identified components; they are plotted in the left graph in \reffig{fig_real_data}. The right graph in that figure visualizes the columns of the third factor matrices of $\tensor{B}_3$ and $\tensor{B}_3'$, which represent the excitation spectra. Note that the spectra recovered by RGN-HR are visually indistinguishable from those recovered by GNDL-PCG. For brevity, we do not plot the concentration profiles represented by the first factor matrices; their relative difference in Frobenius norm is approximately $2 \cdot 10^{-6}$.

The true concentrations are also provided in the data set. For both RGN-HR and GNDL-PCG, the columns of the first factor matrix correlate very well with the true data: correlation coefficients greater than $99.8\%$ were obtained in all cases.

\section{Conclusions} \label{sec_conclusions}
We proposed the first Riemannian optimization method for approximating a given tensor by one of low canonical rank. The theoretical analysis in \refsec{sec_trd} motivated why the proposed Riemannian formulation of the TAP should be preferred over the naive, overparameterized formulation involving factor matrices if a Riemannian Gauss--Newton method is employed. Specifically, our analysis predicts a great advantage of the proposed RGN method over state-of-the-art non-Riemannian GN methods when the CPD contains rank-$1$ terms whose norms are of different magnitudes. The numerical experiments in \refsec{sec_numerical_experiments} confirmed this theory.

The second main innovation explored in this paper was to exploit information about the condition number in the optimization method. It was argued in \refsec{sec_ill_H_p} that certain ill-conditioned CPDs are hard to escape with the RGN process. We proposed hot restarts for escaping such regions of ill-conditioned CPDs. The numerical experiments revealed significant speedups attributed to these hot restarts.

Based on the numerical experiments, we believe that RGN-HR can be a suitable alternative for classic GN methods with direct solves of the TRS. Speedups between~$3$ and $2000$ with respect to the state-of-the-art GN method \texttt{nls\_gndl} in Tensorlab~v3.0 \cite{Tensorlab} were observed. Our experiments suggest that RGN-HR can be competitively applied up to $r(\Sigma+d) \approx 1000$, which is one order of magnitude larger than Tensorlab's default choice for \texttt{nls\_gndl}.

An obstacle for extending the RGN-HR method to large-scale TAPs in which the least-squares problem in the TRS is only approximately solved, for example via LSQR, is how a good estimate of the condition number \refeqn{kappa} should be computed inexpensively. As RGN-HR was very competitive for small-scale problems, we believe that this is a promising direction for future work. Another interesting problem is to prove the observations in the informal analysis of \refsec{sec_ill_H_p} rigorously.

\section*{Acknowledgments}
We thank B.~Vandereycken for helpful feedback he shared with us at the $7$th Workshop on Matrix Equations and Tensor Techniques in Bologna, Italy held in February 2017. We also thank two anonymous referees for their valuable feedback that improved this article.

\bibliographystyle{siamplain}

\appendix
\addtolength{\abovedisplayskip}{-1.75pt}
\addtolength{\belowdisplayskip}{-1.75pt}

\section{Implementation details}\label{app_implementation}
This appendix describes efficient formulations of the critical steps in \refalg{alg_rlm}. It is shown below that the following number of operations per iteration (lines 6--11) are obtained:
\begin{equation*}
\begin{array}{ll}
\toprule
\text{line 6:} & \mathcal{O}(dr\Pi + r\Sigma + d^2 r^2 \Sigma^2) \\
\text{line 7:} & \mathcal{O}( r^3 (\Sigma+1)^3 + (r+1)\Pi ) \\
\text{line 8:} & \mathcal{O}(4r\Sigma + r 2^{d+1}) \\
\text{lines 9--11:} & \mathcal{O}(\Pi + r\Sigma) \\
\bottomrule
\end{array}
\end{equation*}
\vspace{.1em}

\subsection{Retraction} \label{sec_fast_retraction}
As observed in \cite[section 3.3]{KSV2014}, the T-HOSVD retraction is computed efficiently by exploiting the structure of the tangent vectors. Essentially the same observations are valid for the ST-HOSVD retraction with foot at the point $p_i = \alpha_i \sten{a}{i}{1} \otimes \cdots \otimes \sten{a}{i}{d}$, where $\sten{a}{i}{k} \in \mathbb{S}^{n_k-1}$. For completeness, we explain this below.

The TRS \refeqn{eqn_tr_model} is solved in local coordinates with respect to the basis $B$ in \refeqn{eqn_orth_basis}, yielding $\widehat{\vect{p}}$ as solution. Write
\(
\widehat{\vect{p}}^T =
\begin{bmatrix}
\sten{x}{1}{T} &
\cdots &
\sten{x}{r}{T}
\end{bmatrix}
\)
with $\vect{x}_i \in \R^{\Sigma+1}$, and then the components of the tangent vector $\vect{t} = (\vect{t}_1, \ldots, \vect{t}_r) \in \Tang{\tuple{p}}{\Var{S}^{\times r}}$ are
\begin{align*}
\vect{t}_i
= T_{p_i} \sten{x}{i}{}
= \sum_{k=1}^d T_{i,k} \, \sten{x}{i}{k} \quad\in\Tang{p_i}{\Var{S}}.
\end{align*}
Since $U_{i,1} = I$, we find by the multilinearity of the tensor product that
\begin{multline*}
p_i + \vect{t}_i = (\sten{x}{i}{1} +  \alpha_i \sten{a}{i}{1}) \otimes \sten{a}{i}{2}\otimes\cdots\otimes\sten{a}{i}{d} \\
+\sten{a}{i}{1}\otimes (U_{i,2} \sten{x}{i}{2})\otimes \sten{a}{i}{3} \otimes \cdots\otimes \sten{a}{i}{d} +
 \cdots + \sten{a}{i}{1}\otimes\cdots\otimes\sten{a}{i}{d-1}\otimes (U_{i,d}\sten{x}{i}{d}).
\end{multline*}
Let $\beta_{i,k} \sten{q}{i}{k} = U_{i,k}\sten{x}{i}{k}$ with $\|\sten{q}{i}{k}\|=1$ for $k=2,\ldots,d$. Also define $\beta_{i,1} \sten{q}{i}{1} = \sten{x}{i}{1} + \alpha_i \sten{a}{i}{1}$ with $\|\sten{q}{i}{1}\|=1$.
Then, we may write $p_i + \vect{t}_i$ as {the orthogonal Tucker decomposition}
\begin{align}\label{eqn_rank_one_plus_tg}
 p_i + \vect{t}_i = ( \sten{q}{i}{1}, Q_2, \ldots, Q_d ) \cdot \tensor{S}, \text{ where } Q_k = \begin{bmatrix} \sten{a}{i}{k} & \sten{q}{i}{k} \end{bmatrix} \text{ for }  k = 2,\ldots,d,
\end{align}
and where the order-$d$ tensor $\tensor{S} \in \R^{1 \times 2 \times \cdots \times 2}$ is given by
\[
 s_{1,i_2,\ldots,i_d} =
 \begin{cases}
 \beta_{i,1} & \text{if } i_2 = \cdots = i_d = 1, \\
 \beta_{i,k} & \text{if } i_k=2 \text{ and } i_\ell = 1 \text{ for } 2 \le \ell \ne k \le d,\\
  0 & \text{otherwise}.
 \end{cases}
\]
Note that $Q_k^T Q_k = I$, by definition of $U_{i,k}$ in \refeqn{eqn_def_tangent_segre_repr}. To complete the derivation, we need the following result about ST-HOSVD's, which is proved in \refapp{app_proof_sthosvd_retraction}.

\begin{lemma}\label{lem_sthosvd_compression}
Let $\tensor{A} \in \R^{n_1 \times \cdots \times n_d}$ admit an orthogonal Tucker decomposition $\tensor{A} = (Q_1, \ldots, Q_d) \cdot \tensor{B}$, where $Q_k \in \R^{n_k \times m_k}$ has orthonormal columns and $\tensor{B} \in \R^{m_1 \times \cdots \times m_d}$.
Let $(Z_1, \ldots, Z_d) \cdot \tensor{C}$ be a rank-$(r_1,\ldots,r_d)$ ST-HOSVD approximation of $\tensor{B}$ corresponding to the processing order $\pi$. Then, $(Q_1 Z_1, \ldots, Q_d Z_d) \cdot \tensor{C}$ is a rank-$(r_1,\ldots,r_d)$ ST-HOSVD approximation of $\tensor{A}$ corresponding to $\pi$.
\end{lemma}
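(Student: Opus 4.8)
The plan is to prove the statement by induction on the steps of the ST-HOSVD algorithm, tracking how the partially truncated tensor of $\tensor{A}$ is related to that of $\tensor{B}$ at every stage. Recall that ST-HOSVD with processing order $\pi$ produces a sequence $\tensor{A} = \tensor{A}^{(0)}, \tensor{A}^{(1)}, \ldots, \tensor{A}^{(d)}$, where at step $j$ one sets $k = \pi(j)$, computes a factor $U_k \in \R^{n_k \times r_k}$ whose columns span a dominant $r_k$-dimensional left singular subspace of the flattening $(\tensor{A}^{(j-1)})_{(k)}$, and then projects mode $k$ to obtain $\tensor{A}^{(j)} = (I, \ldots, U_k^T, \ldots, I) \cdot \tensor{A}^{(j-1)}$; the final core is $\tensor{A}^{(d)}$. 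Write $\tensor{B}^{(j)}$ and $Z_k$ for the analogous quantities produced by applying ST-HOSVD to $\tensor{B}$. I would carry the inductive invariant that $\tensor{A}^{(j)} = (H_1^{(j)}, \ldots, H_d^{(j)}) \cdot \tensor{B}^{(j)}$, where $H_k^{(j)} = I$ once mode $k$ has been processed and $H_k^{(j)} = Q_k$ otherwise, together with the assertion that the $\tensor{A}$-factor for each already processed mode $\pi(i)$ equals $Q_{\pi(i)} Z_{\pi(i)}$. The base case $j=0$ is just the hypothesis $\tensor{A} = (Q_1, \ldots, Q_d) \cdot \tensor{B}$.

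The crux is the inductive step, which rests on a single algebraic observation. When mode $k = \pi(j)$ is about to be processed, the invariant and the mode-$k$ flattening identity
\[
\bigl((H_1^{(j-1)}, \ldots, H_d^{(j-1)}) \cdot \tensor{B}^{(j-1)}\bigr)_{(k)} = H_k^{(j-1)} \, (\tensor{B}^{(j-1)})_{(k)} \, \bigl(H_d^{(j-1)} \otimes \cdots \otimes H_{k+1}^{(j-1)} \otimes H_{k-1}^{(j-1)} \otimes \cdots \otimes H_1^{(j-1)}\bigr)^T
\]
give $(\tensor{A}^{(j-1)})_{(k)} = Q_k (\tensor{B}^{(j-1)})_{(k)} W^T$, where $W$ is the displayed Kronecker product of factors each equal to $I$ or $Q_\ell$. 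Since a Kronecker product of matrices with orthonormal columns again has orthonormal columns, $W$ has orthonormal columns, and $Q_k$ does by hypothesis. Writing a thin SVD $(\tensor{B}^{(j-1)})_{(k)} = Z D V^T$ with $D$ diagonal, I obtain $(\tensor{A}^{(j-1)})_{(k)} = (Q_k Z) D (W V)^T$; because $Q_k Z$ and $W V$ still have orthonormal columns (e.g. $(Q_k Z)^T(Q_k Z) = Z^T Q_k^T Q_k Z = I$), this is a valid SVD with the \emph{same} singular values $D$. Hence truncation occurs at the same index, and a dominant $r_k$-dimensional left singular factor of $(\tensor{A}^{(j-1)})_{(k)}$ may be taken to be $Q_k Z_k$, that is, $U_k = Q_k Z_k$.

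It then remains to propagate the invariant. Applying $U_k^T = Z_k^T Q_k^T$ to mode $k$ of $\tensor{A}^{(j-1)}$ and using $Q_k^T Q_k = I$ collapses the mode-$k$ factor from $Q_k$ to $Z_k^T$, so $\tensor{A}^{(j)} = (H_1^{(j-1)}, \ldots, Z_k^T, \ldots, H_d^{(j-1)}) \cdot \tensor{B}^{(j-1)}$. Factoring $Z_k^T$ out through the $\tensor{B}$-recursion $\tensor{B}^{(j)} = (I, \ldots, Z_k^T, \ldots, I) \cdot \tensor{B}^{(j-1)}$ yields $\tensor{A}^{(j)} = (H_1^{(j-1)}, \ldots, I, \ldots, H_d^{(j-1)}) \cdot \tensor{B}^{(j)}$, which is exactly the invariant with $H_k^{(j)} = I$. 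At $j = d$ every factor is $I$, so $\tensor{A}^{(d)} = \tensor{B}^{(d)} = \tensor{C}$, and the accumulated mode factors are $Q_1 Z_1, \ldots, Q_d Z_d$, giving the claim.

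The main subtlety to handle carefully is not any hard estimate but the non-uniqueness of the dominant left singular subspace when singular values coincide at the truncation boundary: strictly, ST-HOSVD pins down only that subspace, not a specific factor. I would address this by phrasing the conclusion as the \emph{existence} of a valid ST-HOSVD of $\tensor{A}$ in order $\pi$ --- namely the one induced by the chosen ST-HOSVD of $\tensor{B}$ --- which is all that \reflem{lem_sthosvd_retraction} and the retraction \refeqn{eqn_sthosvd_retraction} require. The only other points demanding attention are the standard flattening identity above and the bookkeeping of which modes have already been truncated, both of which are routine.
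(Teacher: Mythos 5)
Your proof is correct and follows essentially the same route as the paper's: an induction over the modes in the processing order, whose crux is that the mode-$k$ flattening of the partially truncated $\tensor{A}$ equals $Q_k$ times that of the partially truncated $\tensor{B}$ times a Kronecker product of orthonormal-column factors, whence an SVD of the latter yields one of the former with identical singular values and $U_k = Q_k Z_k$. Your treatment is in fact slightly more careful than the paper's on one point: where the paper appeals to the SVD being ``essentially unique,'' you construct the induced SVD explicitly and phrase the conclusion as the existence of a valid ST-HOSVD of $\tensor{A}$, which cleanly sidesteps the ambiguity when singular values coincide at the truncation boundary.
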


The rank-$(1,\ldots,1)$ ST-HOSVD approximation of $p_i + \vect{t}_i = (\sten{q}{i}{1}, Q_2,\ldots,Q_d) \cdot \tensor{S}$ in \refeqn{eqn_rank_one_plus_tg} can thus be computed efficiently from the ST-HOSVD approximation $(1,\sten{r}{i}{2},\ldots,\sten{r}{i}{d}) \cdot \lambda$ of $\tensor{S}$; it is namely $(\sten{q}{i}{1}, Q_2 \sten{r}{i}{2}, \ldots, Q_d \sten{r}{i}{d}) \cdot \lambda$.

Computing the retraction as above requires the following number of operations per rank-$1$ tensor $p_i \in \Var{S}$:
$\mathcal{O}(2\Sigma)$ operations for constructing $\sten{q}{i}{1}$ and $Q_2$, \ldots, $Q_d$; $\mathcal{O}(2^{d+1})$ operations for computing the ST-HOSVD approximation of $\tensor{S}$; and $\mathcal{O}(2 \Sigma)$ operations for recovering the ST-HOSVD approximation of $p_i + \vect{t}_i$ from the approximation of $\tensor{S}$. This results in
\(
 \mathcal{O}( r(4 \Sigma + 2^{d+1}) )
\)
operations for one product ST-HOSVD retraction.

\subsection{Gradient and Hessian approximation} \label{sec_fast_hessian} \label{sec_fast_gradient}
Recall that the set of tensors of rank bounded by $r$ is the image of the map ${\text\textlbrackdbl} \cdot {\text\textrbrackdbl}$ in \refeqn{factor_matrices_def}. Its Jacobian is $J$ as in~\refeqn{eqn_overparam_jacobian}. Let $\tuple{p} = (\sten{a}{i}{1}\otimes\cdots\otimes\sten{a}{i}{d})_{i=1}^r$ and write $D:=\operatorname{diag}(U_{1,1},\ldots,U_{r,1},\ldots,U_{1,d},\ldots,U_{r,d})$, where the $U_{i,j}$ are defined as in \refsec{sec_parametrization}.
Comparing with \refeqn{eqn_jacobian}, we note \(
 T_{\tuple{p}} = J D
\).

For $\vect{x} \in \R^\Pi$, the operation $J^T \vect{x}$ is an important computational kernel called the \emph{CP gradient} \cite{Phan2013}. Computationally efficient implementations were proposed in \cite{Phan2013,VMV2015}. Since $T_{\tuple{p}}^T \vect{g}_{\tuple{p}}$ is equivalent to $D^T (J^T \vect{g}_{\tuple{p}})$, one can employ any of the efficient methods for computing CP gradients; afterwards multiply the result by the block diagonal matrix $D$. In our Matlab implementation, the CP gradient is computed via left-to-right and right-to-left (RTL) contractions \cite{Phan2013}, \cite[section 2.3]{VMV2015}. This scheme requires $\mathcal{O}(d r\Pi + r\Sigma)$ operations.

Efficient algorithms for computing $J^T J$ were investigated in the literature; e.g., \cite{Sorber2013a,TDtB2011,Paatero1997,TomasiPhD,Phan2013a}. One can then efficiently compute $H_{\tuple{p}}$ as
\(
H_{\tuple{p}} = T_{\tuple{p}}^T T_{\tuple{p}} = D^T (J^T J) D.
\)
In our implementation we chose the method from \cite{Sorber2013a} for constructing $J^T J$ efficiently. The computational complexity of this algorithm is approximately $\mathcal{O}\bigl( (d r \Sigma)^2 \bigr)$.

\subsection{Optimal coefficients} \label{sec_optimal_coefficients}
The solution $\vect{x}^*$ of least-squares problem \refeqn{eqn_optimal_coefficients} is obtained by observing that it is equivalent to
\[
\vect{x}^* = \begin{bmatrix} \sten{a}{1}{1}\otimes\cdots\otimes\sten{a}{1}{d} & \cdots & \sten{a}{r}{1}\otimes\cdots\otimes\sten{a}{r}{d} \end{bmatrix}^\dagger \operatorname{vec}(\tensor{B}) =: (A_1 \odot \cdots \odot A_d)^\dagger \vect{b},
\]
where $\vect{b} = \operatorname{vec}(\tensor{B}) \in \R^\Pi$ is the vectorization of $\tensor{B}$, $A_k = \bigl[ \sten{a}{i}{k} \bigr]_{i=1}^r$ are the factor matrices and $\odot$ is the columnwise Khatri--Rao product.
Recall that the optimal coefficients are computed either in step {2} of \refalg{alg_rlm} after randomly initializing the rank-$1$ tensors, or in step 11 of \refalg{alg_hot_restarts} after randomly perturbing them.
As a result, in both cases $A_1 \odot \cdots \odot A_d$ has linearly independent columns with probability~$1$.\footnote{One of our assumptions is that $r < \frac{\Pi}{\Sigma+1}$ is strictly subgeneric, so that in particular $r < \Pi$.}
Then, it is well-known \cite{KB2009} that
\[
 \vect{x}^* = (A_1 \odot \cdots \odot A_d)^\dagger \vect{b} = \bigl( (A_1^T A_1) \circledast \cdots \circledast (A_d^T A_d) \bigr)^{-1} \bigl( (A_1 \odot \cdots \odot A_d)^T \vect{b} \bigr),
\]
where $\circledast$ is the element-wise or Hadamard product.
The rightmost matrix-vector product can be interpreted as $r$ simultaneous tensor-to-vector contractions, which we compute with RTL contractions. Thereafter, the linear system is constructed as the formula suggests and solved via a Cholesky factorization. The optimal coefficients can thus be computed in $\mathcal{O}(r\Pi + r^2 \Sigma + r^3)$ operations.

\section{Proofs of the lemma's}\label{app_proof_sthosvd_retraction}

\begin{proof}[Proof of \reflem{norm_of_phi}]
We have $\deriv{\tuple{p}}{\Phi}(U_1{\vect{x}}_1, \ldots, U_r {\vect{x}}_r) = U_1 {\vect{x}}_1 + \ldots + U_r {\vect{x}}_r$ from \cite{BV2017}, where ${\vect{x}}_i \in \R^{\Sigma+1}$, and $U_i \in \R^{\Pi \times (\Sigma+1)}$ contains an orthonormal basis of $\Tang{p_i}{\Var{S}}$. Then, using the triangle inequality we obtain
\begin{align*}
 \|\deriv{\tuple{p}}{\Phi}\|_2
 = \max_{\|U_1 {\vect{x}}_1\|^2 + \cdots + \|U_r {\vect{x}}_r\|^2 = 1} \|U_1 {\vect{x}}_1 + \cdots + U_r {\vect{x}}_r\|
 \le \max_{c_1^2 + \cdots + c_r^2 = 1} \bigl( c_1 + \cdots + c_r \bigr),
\end{align*}
where we set $c_i := \|U_k \vect{x}_i \| = \|{\vect{x}}_i\| \in \R^r$ in the last step. Since we have the inequality $\max_{\|\vect{c}\|=1} \|\vect{c}\|_1 \le \sqrt{r} \|\vect{c}\| = \sqrt{r}.$ This proves the upper bound.

To prove the lower bound, take any $\vect{x}_1$ of unit norm. Then,
$$\Vert\deriv{\tuple{p}}{\Phi}(U_1{\vect{x}}_1, U_20, \ldots, U_r 0)\Vert = \Vert U_1\vect{x}_1\Vert = 1,$$
which implies that $\|\deriv{\tuple{p}}{\Phi}\|_2 \geq 1$. The proof is concluded.
\end{proof}

\begin{proof}[Proof of \reflem{prop_I_r}]
Let $\Var{S}_{\C}$ denote the complexification of $\Var{S}$, and let $\sigma_r(\Var{S}_{\C})$ denote the $r$-secant variety of $\Var{S}_{\C}$ \cite{Harris1992}.
Recall that the real points of $\mathcal{S}_\C$ are dense in the Zariski topology \cite[section 5]{QCL2016}, that $\sigma_r(\mathcal{S})$ (and so $\sigma_r(\mathcal{S}_\C)$ as well) is non-defective by assumption, and, hence, that $\operatorname{rank} T_{\tuple{p}} = \dim \mathrm{T}_{\Phi(\tuple{p})} \sigma_r(\mathcal{S}_\C) = \dim (\Var{S}_\C)^{\times r}$ for all $\tuple{p}\in\Var{S}^{\times r}$ in a Zariski-dense set by Terracini's lemma \cite{Landsberg2012}.
Then, applying Sard's theorem \cite{Harris1992} to $\Phi_\C : (\Var{S}_{\C})^{\times r} \to \Var{S}_{\C}, (p_1,\ldots,p_r) \mapsto p_1 + \cdots + p_r$ shows that $\mathcal{I}_r$ is contained in the complex subvariety of critical points of $\Phi_\C$.
\end{proof}

\begin{proof}[Proof of \reflem{lem_sthosvd_compression}]
Without loss of generality we assume that $\vect{p} = [1\; 2\; \cdots\; d]$.

Let $(U_1,\ldots,U_d) \cdot \tensor{S}$ be the ST-HOSVD approximation of $\tensor{A}$. We prove by induction that $U_i = Q_i Z_i$. It suffices to show that the bases can be chosen to be the same because they completely determine the core tensor by the relationship $\tensor{S} = (U_1,\ldots,U_d)^T \cdot \tensor{A}$.

Let $USV^T$ be a compact SVD of $\tensor{A}_{(1)}$ and let $\widetilde{U}\widetilde{S}\widetilde{V}^T$ be an SVD of $\tensor{B}_{(1)}$. Then,
\begin{align*}
 USV^T
 = \tensor{A}_{(1)}
 = Q_1 \tensor{B}_{(1)} (Q_2 \otimes \cdots \otimes Q_d)^T
 = (Q_1 \widetilde{U}) \widetilde{S} (\widetilde{V}^T (Q_2 \otimes \cdots \otimes Q_d)^T).
\end{align*}
Both the leftmost and rightmost expressions specify an SVD of $\tensor{A}_{(1)}$. Since it is es\-sentially unique,  {$\widetilde{U}$ can be chosen} so that $U = Q_1 \widetilde{U}$. By definition of the ST-HOSVD in \cite{VVM2012}, $U = [U_1 \;\; X]$ and $\widetilde{U} = [Z_1 \;\; Y]$, so that $U_1 = Q_1 Z_1$, proving the base case.

Assume now that $U_\ell = Q_\ell Z_\ell$ for all $\ell = 1, \ldots, k-1$, then we prove that it holds for $k$ as well. Define
\begin{align*}
 \tensor{A}_{(k)}^{(k-1)} &:= \tensor{A}_{(k)} (U_1 \otimes \cdots \otimes U_{k-1} \otimes I \otimes \cdots \otimes I) \text{ and }\\
 \tensor{B}_{(k)}^{(k-1)} &:= \tensor{B}_{(k)} (Z_1 \otimes \cdots \otimes Z_{k-1} \otimes I \otimes \cdots \otimes I).
\end{align*}
Since $\tensor{A}_{(k)} = Q_k \tensor{B}_{(k)} (Q_1 \otimes \cdots Q_{k-1} \otimes Q_{k+1} \otimes \cdots \otimes Q_d )^T$, it follows that
\begin{align*}
 \tensor{A}_{(k)}^{k-1}
 &= Q_k \tensor{B}_{(k)} (Q_1^T U_1 \otimes \cdots \otimes Q_{k-1}^T U_{k-1} \otimes Q_{k+1}^T \otimes \cdots \otimes Q_d^T) \\
 &= Q_k \tensor{B}_{(k)} (Z_1 \otimes \cdots \otimes Z_{k-1} \otimes Q_{k+1}^T \otimes \cdots \otimes Q_d^T) \\
 &= Q_k \tensor{B}_{(k)}^{(k-1)} (I\otimes \cdots \otimes I\otimes Q_{k+1} \otimes \cdots \otimes Q_d)^T.
\end{align*}
Let $USV^T$ be the compact SVD of $\tensor{A}_{(k)}^{(k-1)}$ and let $\widetilde{U}\widetilde{S}\widetilde{V}^T$ be the compact SVD of $\tensor{B}_{(k)}^{(k-1)}$. Then,
\[
 USV^T = \tensor{A}_{(k)}^{(k-1)} = (Q_k \widetilde{U}) \widetilde{S} (\widetilde{V}^T (I\otimes \cdots \otimes I\otimes Q_{k+1} \otimes \cdots \otimes Q_d)^T).
\]
The leftmost and rightmost expressions are both compact SVDs, hence $\widetilde{U}$ can be chosen so that $U = Q_k \widetilde{U}$. From the definition of the ST-HOSVD in \cite{VVM2012} it again follows that $U_k = Q_k Z_k$, which concludes the proof.
\end{proof}

\end{document}